\newtheorem{theorem}{Theorem}[section]
\newtheorem{proposition}[theorem]{Proposition}
\newtheorem{corollary}[theorem]{Corollary}
\newtheorem{lemma}[theorem]{Lemma}
\newtheorem{algorithm}[theorem]{Algorithm}
\theoremstyle{definition}
\newtheorem{example}[theorem]{Example}
\newtheorem{remark}[theorem]{Remark}
\renewcommand{\L}{\mathcal{L}}
\def\CC{\mathscr{C}}
\def\d{{\mathrm a}}
\def\e{{\mathrm e}}
\def\g{{\mathrm g}}
\def\m{{\mathrm m}}
\def\n{{\mathrm n}}
\def\t{{\mathrm t}}
\def\gcd{{\mathrm{gcd}}}
\def\A{{\mathrm{A}}}
\def\B{{\mathrm{B}}}
\def\C{{\mathrm C}}
\def\F{{\mathrm F}}
\def\G{{\mathrm G}}
\def\L{{\mathrm L}}
\def\T{{\mathrm T}}
\def\SG{\mathrm{SG}}
\def\NN{{\mathrm N}}
\def\PF{{\mathrm{PF}}}
\def\Sat{{\mathrm{Sat}}}
\def\msg{{\mathrm{ msg }}}
\def\Ap{{\mathrm{ Ap}}}
\def\max{{\mathrm{ max}}}
\def\MED{{\mathrm{MED}}}
\def\Cad{{\mathrm {Cad}}}
\def\sA{\mathscr{A}}
\def\msg{{\mathrm{ msg }}}
\def\Maximals{\mathrm{Maximals}_{\leq_S}}
\def\max{\mathrm{max}}
\def\min{\mathrm{min}}
\def\max{\mathrm{max}}
\def\N{\mathbb{N}}
\def\Z{\mathbb{Z}}
\def\Q{\mathbb{Q}}
\def\d{\mathrm{d}}
\def\rank{\mathrm{rank}\, }
\def\Ap{\mathrm{Ap}}
\def\int{\mathrm{int}}
\title{The covariety of saturated numerical semigroups with fixed  Frobenius number}
\author{
	M. A. Moreno-Fr\'{\i}as \footnote{
		Dpto. de Matem\'aticas, Facultad de Ciencias,
		Universidad de C\'adiz, E-11510, Puerto Real  (C\'{a}diz, Spain).
		Partially supported by  Junta de Andaluc\'{\i}a group FQM-298, 
		Proyecto de Excelencia de la Junta de Andalucía ProyExcel\_00868, Proyecto de investigación del Plan Propio--UCA 2022-2023 (PR2022-011) and Proyecto de investigación del Plan Propio--UCA 2022-2023 (PR2022-004).
		E-mail: mariangeles.moreno@uca.es.}
	\and
	J. C. Rosales \footnote{
		Dpto. de \'Algebra, Facultad de Ciencias, Universidad de Granada,
		E-18071, Granada. (Spain).
		Partially supported by  Junta de Andaluc\'{\i}a group FQM-343,
		Proyecto de Excelencia de la Junta de Andalucía ProyExcel\_00868 and Proyecto de investigación del Plan Propio--UCA 2022-2023 (PR2022-011).
		E-mail: jrosales@ugr.es.}
}
\date{}
\begin{document}
 
\maketitle

\begin{abstract}

In this work we will show that  if $F$ is  a positive integer, then  $\Sat(F)=\{S\mid S \mbox{ is a saturated numerical semigroup with Frobenius number } F\}$ is a covariety. As a consequence, we present two algorithms: one that computes $\Sat(F),$ and the other  which computes all the elements of $\Sat(F)$ with a fixed genus. 
	 
	  If $X\subseteq S\backslash \Delta(F)$ for some $S\in \Sat(F),$ then we will see that there is  the least element of $\Sat(F)$ containing a $X$. This element will  denote  by $\Sat(F)[X].$
	  If $S\in\Sat(F),$ then we define the $\Sat(F)$-rank of $S$ as the minimum of  $\{\mbox{cardinality}(X)\mid S=\Sat(F)[X]\}.$ In this paper, also we present an algorithm to compute all the element of $\Sat(F)$ with a given 
	  $\Sat(F)$-rank.

\smallskip
    {\small \emph{Keywords:} Numerical semigroup, covariety,  Frobenius number, genus, saturated numerical semigroup,   algorithm.}

   \smallskip
    {\small \emph{MSC-class:} 20M14 (Primary),  11D07, 13H10 (Secondary).}
\end{abstract}

\section{Introduction}

Let $\N$ the set of nonnegative integers numbers. A {\it numerical
	semigroup} is a subset $S$ of $\N$ which is closed by sum, $0\in S$
and $\N\backslash S$ is finite. The set $\N\backslash S$ is known as the set of {\it gaps} of $S$ and its cardinality, denoted by $\g(S),$ is the {\it genus} of $S.$ The largest integer not belonging to $S$, is known as the {\it Frobenius number} of $S$ and it will be denoted by $\F(S).$

Let $\{n_1<\dots <\n_p\}\subseteq \N$ with $\gcd(n_1,\dots, n_p)=1.$ Then 
$
\langle n_1,\dots, n_p \rangle =\left \{\sum_{i=1}^p \lambda_in_i \mid \{\lambda_1,\dots, \lambda_p\}\subseteq \N \right\}
$
is a numerical semigroup and every numerical semigroup has this form (see \cite[Lemma 2.1]{libro}). The set ${n_1<\dots <n_p}$ is called {\it system of generators} of $S$, and we write $S=\langle  n_1,\dots, n_p \rangle.$ 
We say that a system of generators of a numerical semigroup is a {\it minimal system of generators} if none of its proper subsets generates the numerical semigroup. Every numerical semigroup has a unique minimal system of generators, which in addition is finite (see  \cite[Corollary 2.8]{libro}). The minimal system of generators of a numerical semigroup $S$ is denoted by $\msg(S).$ Its cardinality is called the {\it embedding dimension} and will be denoted by $\e(S).$ Another invariant which we will use in this work is the {\it multiplicity} of $S$, denoted by $\m(S).$ It is defined as the minimum of $S\backslash \{0\}.$ 

Given $S$  a numerical semigroup the mulplicity, the genus and the Frobenius number of $S$  are  three invariants  very important in the theory numerical semigroups (see for instance \cite{alfonsin} and \cite{barucci} and the reference given there) and they will play a very important role in this work.

The  Frobenius problem (see \cite{alfonsin}) for numerical semigroups, lies in finding formulas to obtain the Frobenius number and the genus of a numerical semigroup from its minimal system of generators. When the numerical semigroup has embedding dimension two, this problem was solved by J. J. Sylvester (see \cite{sylvester}). However, if the numerical semigroup has embedding dimension greater than or equal to three,  the problem is still open. 

Looking to find a solution to the Frobenius problem, in \cite{covariedades}, we study the set 
 $\sA(F)=\{S\mid S \mbox{ is a numerical semigroup and } \F(S)=F\},$ being $F\in \N\backslash \{0\}.$ The generalization of  $\sA(F)$ as a family of numerical semigroups that verifies certain properties leads us introduce in \cite{covariedades}, the concept of  covariety. That is, a {\it covariety} is  a nonempty family $\CC$ of numerical semigroups that  fulfills  the following conditions:
\begin{enumerate}
	\item[1)]  $\CC$ has a minimum, denoted by $\Delta(\CC)=\min(\CC),$ with respect to set inclusion.
	\item[2)] If $\{S, T\} \subseteq \CC$, then $S \cap  T \in \CC$.
	\item[3)]  If $S \in \CC$  and $S \neq  \Delta(\CC)$, then $S \backslash \{\m(S)\} \in \CC$.
\end{enumerate}

This concept has allowed us to study common properties of some families of numerical semigroups. For instance, in \cite{coarf}, we have studied the set of all numerical semigroups which have the Arf property (see for example, \cite{barucci})  with a given Frobenius number,  showing some algorithms to compute them.

In the semigroup literature one can find a long list of works dedicated to the study of  one dimensional analytically irreducible domains via their value
semigroup (see  for instance \cite{bertin}, \cite{castellanos}, \cite{delorme}, \cite{kunz} and \cite{watanabe}). One of the properties
studied for this kind of rings using this approach has been  to be saturated. Saturated rings were introduced in three different ways by Zariski (\cite{zariski}),
Pham-Teissier (\cite{pham}) and Campillo (\cite{campillo}). Theses tree definitions coincide for
algebraically closed fields of zero characteristic. The characterization of saturated ring 
in terms of their value semigroups gave rise to the notion of saturated numerical semigroup (see [\cite{delgado}, \cite{nunez}]).

If $A \subseteq \N$ and $a\in A,$ then we denote by $\d_A(a)=\gcd\{x\in A\mid x\leq a\}.$ A numerical semigroup $S$ is {\it saturated} if $s+\d_S(s)\in S$ for all $s\in S\backslash \{0\}.$

If $F\in \N\backslash \{0\},$ we denote by 
$$\Sat(F)=\{S\mid S \mbox{ is a saturated numerical semigroup and }\F(S)=S\}.$$
In this work, we study the set of $\Sat(F)$ by using the techniques of covarieties.

 The structure of the paper is the following.  Section 2 will be devoted to recall some concepts and result which will be used in this work. Also, we show how we can compute some of then  with the help  of the \texttt{GAP} \cite{GAP} package \texttt{numericalsgps} \cite{numericalsgps}. In Section 3 we will show  that $\Sat(F)$ is a covariety. This fact allow us to order the elements of $\Sat(F)$ making a rooted tree, and consequently, to present an algorithm which computes all the elements of $\Sat(F).$
 
 In Section 4, we will see who are the maximal elements  of $\Sat(F).$ We compute the set $\{\g(S)\mid S\in \Sat(F)\}$ and we apply this result to give an algorithm which enables to calculate all the elements of $\Sat(F)$ with a fixed genus.
 
 A set $X$ is called a $\Sat(F)$-{\it set}, if it verifies the following conditions:
 \begin{enumerate}
 	\item[1)] $X\cap \{0,F+1,\rightarrow\}=\emptyset,$  where the symbol $\rightarrow$ means that every integer greater than $F+1$ belongs to the set.
 	\item[2)]There is $S\in \Sat(F)$ such that $X\subseteq S.$
 \end{enumerate}

In Section 5, we  will see that if $X$ is a $\Sat(F)$-set, then there is the least element of $\Sat(F)$ containing a $X$. This element will  denote  by $\Sat(F)[X].$

If $S=\Sat(F)[X],$ then we will say that $X$ is a  $\Sat(F)$-{\it system of generators of } $S.$ Also, we will show that every element of $\Sat(F)$ admits a unique minimal $\Sat(F)$-sytem of generators.

The $\Sat(F)$-{\it rank} of an element of $\Sat(F)$ is the cardinal of its minimal $\Sat(F)$-sytem of generators. In Section 6, we presente an algorithmic procedure to compute all the elements of $\Sat(F)$ with a given  $\Sat(F)$-rank.

\section{Preliminaires}
In this section we present some concepts and results which are necessary for the understant of the work.
The following result appears in \cite[Proposition 3.10]{libro}.
\begin{proposition}\label{proposition1}
	If $S$ is a numerical semigroup, then $\e(S)\leq \m(S).$
\end{proposition}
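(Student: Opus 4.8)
The plan is to prove that $\e(S)\leq \m(S)$ by showing that the minimal generators, when reduced modulo $\m(S)$, occupy distinct residue classes, and then counting. Write $m=\m(S)$ for the multiplicity, which is the smallest nonzero element of $S$, and let $\msg(S)=\{n_1,\dots,n_p\}$ be the minimal system of generators, so that $\e(S)=p$. Since $m\in S\setminus\{0\}$, the multiplicity $m$ is itself a minimal generator (it cannot be written as a sum of smaller nonzero elements of $S$, as none exist), so without loss of generality $n_1=m$.

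The key step is to establish that no two distinct minimal generators are congruent modulo $m$. I would argue by contradiction: suppose $n_i\equiv n_j\pmod{m}$ with $n_i<n_j$ and $i\neq j$. Then $n_j=n_i+km$ for some positive integer $k$, since $n_j>n_i$ and both differ by a multiple of $m$. Because $m\in S$ and $n_i\in S$, closure under addition gives $n_i+km\in S$, but more to the point this exhibits $n_j=n_i+km$ as a sum of $n_i$ and $k$ copies of the generator $m=n_1$. This expresses $n_j$ as a combination of other generators, contradicting the minimality of the system of generators (here one uses that $n_j\neq n_i$ and $n_j\neq m$ so that $n_i$ and $m$ are genuinely other generators, which holds because $n_j$ is strictly largest among the three quantities involved).

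Once the generators are known to lie in pairwise distinct residue classes modulo $m$, the counting is immediate. There are exactly $m$ residue classes modulo $m$, namely $0,1,\dots,m-1$. Moreover the generator $n_1=m$ lies in the class $0$, and this is the only way the class $0$ is used among the generators: any other generator congruent to $0$ modulo $m$ would be a positive multiple of $m$, hence a sum of copies of $m$, again contradicting minimality. Thus the $p$ generators inject into the set of $m$ residue classes, giving $p\leq m$, that is $\e(S)\leq \m(S)$.

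I do not expect a serious obstacle here; the only point requiring care is the bookkeeping in the contradiction step, namely verifying that when $n_i\equiv n_j\pmod m$ the smaller one together with the multiplicity really are \emph{distinct} generators available to build the larger one, so that minimality is genuinely violated. Handling the edge case where one of the generators equals $m$ itself is the main thing to state cleanly; everything else is a direct residue-class counting argument.
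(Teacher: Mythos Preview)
Your argument is correct: showing that distinct minimal generators lie in distinct residue classes modulo $m=\m(S)$ and then counting is the standard proof of this inequality, and your handling of the edge cases is sound (in particular, every minimal generator is at least $m$, so the larger of two congruent generators is strictly greater than $m$ and hence genuinely redundant).

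As for comparison with the paper: the paper does not supply its own proof of this proposition. It simply records the statement and cites \cite[Proposition~3.10]{libro} for the result. Your residue-class argument is exactly the classical one found in that reference, so there is no methodological difference to discuss.
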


A numerical semigroup S is said to have {\it maximal embedding
	dimension} (from now on $\MED$-{\it semigroup}) if $\e(S) = \m(S).$ 

By applying the results of \cite[Chapter 3]{libro}, we have the following result.

\begin{proposition}\label{proposition2}
	Every saturated numerical semigroup is a $\MED$-semigroup.
\end{proposition}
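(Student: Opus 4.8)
The plan is to prove the reverse of the inequality already supplied by Proposition~\ref{proposition1}. Since that proposition gives $\e(S)\leq \m(S)$, it suffices to show $\e(S)\geq \m(S)$ whenever $S$ is saturated. I would invoke the standard characterization of $\MED$-semigroups from \cite[Chapter 3]{libro}, namely that a numerical semigroup $S$ with multiplicity $\m=\m(S)$ is a $\MED$-semigroup if and only if $x+y-\m\in S$ for all $x,y\in S\backslash\{0\}$ (equivalently, every nonzero element of $\Ap(S,\m)$ is a minimal generator). This reduces the whole statement to verifying that single additive condition for a saturated $S$, so the argument becomes entirely about the function $\d_S$.

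The heart of the proof is the following lemma about saturated semigroups, which I would isolate first: if $p\in S\backslash\{0\}$ and $g$ is a nonnegative multiple of $\d_S(p)$, then $p+g\in S$. I would prove this by strong induction on $g$. The base case $g=0$ is trivial. For the inductive step with $g>0$, set $q=p+\d_S(p)$, which lies in $S$ by the definition of saturation. Since $p\leq q$, the gcd-set defining $\d_S(p)$ is contained in that defining $\d_S(q)$, so $\d_S(q)\mid \d_S(p)$; as $\d_S(p)\mid g$, we get $\d_S(q)\mid\bigl(g-\d_S(p)\bigr)$, and $0\leq g-\d_S(p)<g$. Applying the induction hypothesis to $q$ and the multiple $g-\d_S(p)$ yields $q+\bigl(g-\d_S(p)\bigr)=p+g\in S$, as required.

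With the lemma in hand the conclusion is immediate. Given $x,y\in S\backslash\{0\}$ with $x\leq y$, both are at least $\m$, and $\delta:=\d_S(y)$ divides every element of $S$ not exceeding $y$; in particular $\delta\mid \m$ and $\delta\mid x$, whence $\delta\mid (x-\m)$ with $x-\m\geq 0$. The lemma applied to $p=y$ and $g=x-\m$ gives $x+y-\m=y+(x-\m)\in S$, so the $\MED$-criterion holds and $S$ is a $\MED$-semigroup. The main obstacle is precisely the lemma: the naive induction fails because $\d_S$ can strictly decrease as one moves up through $S$ (so one cannot assume $\d_S(p+\d_S(p))=\d_S(p)$ and simply iterate the saturation step). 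The remedy is to phrase the claim for an \emph{arbitrary} multiple $g$ of $\d_S(p)$ and induct on $g$, using that the possibly smaller gcd at the next saturation point still divides the remaining gap $g-\d_S(p)$; everything else is routine divisibility bookkeeping.
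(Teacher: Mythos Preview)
Your proof is correct. The paper itself gives no argument for this proposition: it simply defers to \cite[Chapter~3]{libro}. What you have written is a self-contained proof, and the route you take is the standard one lurking behind that citation.

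Two remarks worth making. First, the lemma you isolate --- that $p+g\in S$ whenever $p\in S\backslash\{0\}$ and $g$ is a nonnegative multiple of $\d_S(p)$ --- is precisely the implication $1)\Rightarrow 3)$ of what the paper later states as Lemma~\ref{lemma39}, quoted there from \cite{houston1}. So you are reproving from scratch a fact the paper will eventually import; your induction on $g$ (rather than a naive iteration of the saturation step) correctly handles the only real subtlety, namely that $\d_S$ may drop strictly when passing from $p$ to $p+\d_S(p)$, and the divisibility $\d_S(q)\mid\d_S(p)$ is exactly what keeps the induction alive. Second, the $\MED$ criterion you invoke ($x+y-\m(S)\in S$ for all nonzero $x,y\in S$) is indeed one of the equivalent formulations in \cite[Chapter~3]{libro}, so the reduction is legitimate.

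In short: the paper outsources the proof, you supply it, and your argument is sound; the lemma you prove is the same one the paper will later cite rather than prove.
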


Following the notation introduced in \cite{JPAA}, an integer $z$ is a {\it pseudo-Frobenius number} of a numerical semigroup $S$ if $z\notin S$ and $z+s\in S$ for all $s\in S\backslash \{0\}.$  We denote by $\PF(S)$
the set formed by the  pseudo-Frobenius numbers of $S.$ The cardinality of $\PF(S)$ is an important invariant of $S$ (see \cite{froberg} and \cite{barucci}) called the {\it type} of $S,$  denoted by $\t(S).$\\

For instance, let $S=\langle 7,8,9,11,13 \rangle$, if  we want to calculate the set $\PF(S)$, then we use the following sentences:
\begin{verbatim}
	gap> S := NumericalSemigroup(7,8,9,11,13);
	<Numerical semigroup with 5 generators>
	gap> PseudoFrobeniusOfNumericalSemigroup(S);
	[ 6, 10, 12 ]
\end{verbatim}

%

Given $S$  a numerical semigroup, we denote by  $\SG(S)=\{x\in \PF(S)\mid 2x \in S\}.$ Its elements will be called {\it special gaps} of $S.$

For instance, given the numerical semigroup $S=\langle 6,7,8,10,11 \rangle$, if  we want to calculate the set $\SG(S)$, then we use the following sentences:
\begin{verbatim}
	gap> S := NumericalSemigroup(6,7,8,10,11);
	<Numerical semigroup with 5 generators>
	gap> SpecialGaps(S);
	[ 4, 5, 9 ]
\end{verbatim}

The following result appears in \cite[Proposition 4.33]{libro}.
\begin{proposition}\label{proposition5} Let $S$ be a numerical semigroup and $x\in \N\backslash S.$ Then $x\in \SG(S)$ if and only if $S \cup \{x\}$ is a numerical semigroup.	
\end{proposition}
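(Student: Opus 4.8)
The plan is to reduce the statement to a closure condition and then verify the two defining properties of $\SG(S)$ directly. The first observation I would make is that, since $0\in S$ and $x\notin S$, necessarily $x\neq 0$. Moreover, $S\cup\{x\}$ already contains $0$ and has finite complement in $\N$ (because $S$ does), so $S\cup\{x\}$ is a numerical semigroup precisely when it is closed under addition. Hence the whole proof comes down to analysing sums of elements of $S\cup\{x\}$, and the only sums that are not automatically in $S$ are those involving $x$.

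For the forward implication, I would assume $x\in\SG(S)$, so that $x\in\PF(S)$ and $2x\in S$, and check closure by cases. If $a,b\in S$, then $a+b\in S$ trivially. If exactly one summand equals $x$, say the sum is $x+s$ with $s\in S$, then either $s=0$, giving $x+s=x\in S\cup\{x\}$, or $s\neq 0$, in which case $x+s\in S$ by the pseudo-Frobenius property of $x$. Finally, $x+x=2x\in S$ by the hypothesis $2x\in S$. This exhausts all cases and shows $S\cup\{x\}$ is closed under addition.

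For the converse, I would assume $S\cup\{x\}$ is a numerical semigroup and recover the two conditions from its closure. Applying closure to the sum $x+x$ gives $2x\in S\cup\{x\}$; since $x\neq 0$ forces $2x\neq x$, we conclude $2x\in S$. Likewise, for any $s\in S\setminus\{0\}$, closure gives $x+s\in S\cup\{x\}$, and again $x+s\neq x$ because $s\neq 0$, so $x+s\in S$. Together with the hypothesis $x\notin S$, this shows $x\in\PF(S)$, and combined with $2x\in S$ we obtain $x\in\SG(S)$.

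There is no serious obstacle here; the argument is a direct unwinding of the definitions of $\PF(S)$ and $\SG(S)$. The only point requiring a moment of care is the repeated use of $x\neq 0$ to guarantee that the sums $x+x$ and $x+s$ (for $s\neq 0$) cannot equal $x$ itself, which is exactly what lets me push these sums into $S$ rather than merely into $S\cup\{x\}$.
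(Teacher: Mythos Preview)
Your proof is correct. The paper itself does not prove this proposition at all; it simply cites it as \cite[Proposition 4.33]{libro} and moves on. What you have written is the standard direct argument: reduce to closure under addition (since $0\in S$ and finite complement are automatic), and then split into the three cases $a,b\in S$, exactly one of $a,b$ equal to $x$, and $a=b=x$, using the pseudo-Frobenius condition and $2x\in S$ respectively. The converse is handled the same way, with the small but necessary observation that $x\neq 0$ forces $2x\neq x$ and $x+s\neq x$ for $s\neq 0$. There is nothing to add or correct.
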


Let $S$ be a numerical semigroup and $n\in S\backslash \{0\}.$ Define the 
{\it Apéry set} of $n$ in $S$ (in honour of \cite{apery})  as 
$\Ap(S,n)=\{s\in S\mid s-n \notin S\}$. \\

For instance, to compute $\Ap(S,8),$ being $S=\langle 8,9,11,13 \rangle,$  we use  the following sentences:
\begin{verbatim}
	gap> S := NumericalSemigroup(8,9,11,13);
	<Numerical semigroup with 4 generators>
	gap> AperyList(S,8);
	[ 0, 9, 18, 11, 20, 13, 22, 31 ]
	\end{verbatim}

The following result is deduced from \cite[Lemma 2.4]{libro}.

\begin{proposition}\label{proposition6}
	Let $S$ be  a  numerical semigroup and $n\in S\backslash \{0\}.$ Then $\Ap(S,n)$ is a set with cardinality $n.$ Moreover, $\Ap(S,n)=\{0=w(0),w(1), \dots,\\ w(n-1)\}$, where $w(i)$ is the least
	element of $S$ congruent with $i$ modulo $n$, for all $i\in
	\{0,\dots, n-1\}.$
\end{proposition}
From Proposition 3.1 of \cite{libro}, we can deduce the following result.
\begin{proposition}\label{proposition7}
	Let $S$ be a numerical semigroup. Then $S$ is a $\MED$-semigroup if and only if $\msg(S)=\left(\Ap(S,\m(S))\backslash \{0\} \right)\cup \{\m(S)\}.$
\end{proposition}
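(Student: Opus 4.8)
The plan is to establish both implications simultaneously by proving one set inclusion that holds for \emph{every} numerical semigroup and then reading off the equivalence from a count of cardinalities. Throughout, write $m = \m(S)$ and put $T = \left(\Ap(S,m)\backslash\{0\}\right)\cup\{m\}$, so that the claim becomes: $\msg(S) = T$ if and only if $\e(S) = m$.

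First I would pin down the relevant cardinalities. Since $m - m = 0 \in S$, the multiplicity $m$ does not lie in $\Ap(S,m)$; and since $0 - m \notin S$, we have $0 \in \Ap(S,m)$. Proposition \ref{proposition6} then gives $|\Ap(S,m)| = m$, hence $|\Ap(S,m)\backslash\{0\}| = m-1$, and because the union defining $T$ is disjoint we obtain $|T| = m$. On the other side, $|\msg(S)| = \e(S)$ by definition of the embedding dimension, and Proposition \ref{proposition1} ensures $\e(S) \le m$.

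Next I would prove the inclusion $\msg(S) \subseteq T$, which I claim is valid for an arbitrary numerical semigroup. The multiplicity is always a minimal generator (it cannot be a sum of two nonzero elements of $S$, since any such sum is at least $2m$), so $m \in \msg(S)$. Now take any minimal generator $x \neq m$. Then $x \neq 0$, and if $x \notin \Ap(S,m)$ then $x - m \in S$; as $x \neq m$ this gives $x - m \in S\backslash\{0\}$, so $x = m + (x-m)$ writes $x$ as a sum of two nonzero elements of $S$, contradicting that $x$ is a minimal generator. Hence $x \in \Ap(S,m)\backslash\{0\}$, and therefore $\msg(S) \subseteq T$.

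Finally, since $\msg(S) \subseteq T$ with both sets finite, the equality $\msg(S) = T$ holds if and only if $|\msg(S)| = |T|$, that is, if and only if $\e(S) = m$, which is exactly the defining condition for $S$ to be a $\MED$-semigroup. I expect the only substantive step to be the inclusion above, and within it the key observation that a minimal generator lying outside $\Ap(S,m)$ would decompose as $m + (x-m)$; everything else is bookkeeping with the cardinality supplied by Proposition \ref{proposition6}.
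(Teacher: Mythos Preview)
Your argument is correct: the unconditional inclusion $\msg(S)\subseteq T$ together with the cardinality count $|T|=m$ and $\e(S)\le m$ yields the equivalence cleanly. The paper does not actually supply its own proof of this proposition; it merely cites Proposition~3.1 of \cite{libro} and states that the result can be deduced from it. Your self-contained treatment therefore fills in precisely the details the paper chose to delegate to the literature, and it does so along the standard elementary line.
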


Let $S$ be a numerical semigroup. Over $\Z$ we  define the following order relation: $a\leq_S b$ if $b-a \in S.$ The following result is Lemma 10 from \cite{JPAA}.
\begin{proposition}\label{proposition8} If $S$ is  a numerical semigroup and $n \in S\backslash
	\{0\}.$ Then
	$$
	\PF(S)=\{w-n\mid w \in \Maximals \Ap(S,n)\}.
	$$
\end{proposition}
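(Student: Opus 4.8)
The plan is to prove the stated equality by fixing $n \in S\backslash\{0\}$ and showing, for every $w\in\Ap(S,n)$, the equivalence
$$
w\in\Maximals\Ap(S,n) \iff w-n\in\PF(S),
$$
and then checking that each $z\in\PF(S)$ is of the form $w-n$ with $w\in\Ap(S,n)$. Both inclusions of the proposition then follow at once.

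First I would unwind the maximality condition. Because $S\subseteq\N$, the relation $\leq_S$ is antisymmetric, hence a partial order, so ``maximal'' is meaningful: an element $w\in\Ap(S,n)$ is \emph{not} maximal precisely when there exists $s\in S\backslash\{0\}$ with $w+s\in\Ap(S,n)$. The key observation is a one-line translation. For $w\in S$ and $s\in S$ the sum $w+s$ already lies in $S$, so by the definition of the Ap\'ery set, $w+s\notin\Ap(S,n)$ is equivalent to $(w+s)-n\in S$, i.e. to $(w-n)+s\in S$. Combining the two remarks, $w$ is maximal in $\Ap(S,n)$ if and only if $(w-n)+s\in S$ for all $s\in S\backslash\{0\}$. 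Since $w\in\Ap(S,n)$ forces $w-n\notin S$, this last condition is exactly the assertion that $w-n\in\PF(S)$; this establishes the displayed equivalence, and in particular gives $\{w-n\mid w\in\Maximals\Ap(S,n)\}\subseteq\PF(S)$.

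For the reverse inclusion I would take $z\in\PF(S)$ and set $w=z+n$. Then $w\in S$ because $z+n\in S$ (as $z\in\PF(S)$ and $n\in S\backslash\{0\}$), while $w-n=z\notin S$ since $z$ is a gap; hence $w\in\Ap(S,n)$. Applying the equivalence above to this $w$ (for which $w-n=z\in\PF(S)$) shows $w\in\Maximals\Ap(S,n)$, so $z=w-n$ belongs to the right-hand set. I do not expect any genuine obstacle here: the whole argument is a careful unwinding of the definitions of $\Ap(S,n)$, $\leq_S$ and $\PF(S)$. The only point demanding attention is the pivotal equivalence $w+s\notin\Ap(S,n)\Leftrightarrow(w-n)+s\in S$, which is valid only because $w+s$ is already known to lie in $S$; keeping track of that membership is what makes the translation between maximality in the Ap\'ery set and the pseudo-Frobenius property exact.
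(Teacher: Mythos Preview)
Your argument is correct and complete: the equivalence $w\in\Maximals\Ap(S,n)\Leftrightarrow w-n\in\PF(S)$ for $w\in\Ap(S,n)$ is established cleanly, and the reverse inclusion via $w=z+n$ closes the proof. Note that the paper does not actually prove this proposition; it merely cites it as Lemma~10 of \cite{JPAA}, so there is no ``paper's own proof'' to compare against. What you have written is the standard direct unwinding of the definitions and is exactly the argument one would expect to find in the cited source.
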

The next lemma  has an immediate proof.

\begin{proposition}\label{proposition9}
	Let $S$ be a numerical semigroup, $n\in S\backslash \{0\}$ and $w \in \Ap(S,n).$ Then $w\in \Maximals(\Ap(S,n))$ if and only if $w+w'\notin
	\Ap(S,n)$ for all $w'\in \Ap(S,n)\backslash \{0\}. $\\	
\end{proposition}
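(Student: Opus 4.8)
The plan is to unwind the definition of maximality with respect to $\leq_S$ and then reduce the reverse implication to the standard fact that the Apéry set is closed under taking factors. Recall that $w\in\Maximals(\Ap(S,n))$ means precisely that there is no $w''\in\Ap(S,n)$ with $w\leq_S w''$ and $w''\neq w$, that is, with $w''-w\in S\setminus\{0\}$.

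For the forward implication I would argue by contraposition. Suppose there is some $w'\in\Ap(S,n)\setminus\{0\}$ with $w+w'\in\Ap(S,n)$. Setting $w''=w+w'$ we have $w''\in\Ap(S,n)$ and $w''-w=w'\in S\setminus\{0\}$ (using $\Ap(S,n)\subseteq S$ and $w'\neq 0$), so $w$ is not maximal.

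For the reverse implication I would again argue by contraposition: assume $w$ is not maximal, so there exists $w''\in\Ap(S,n)$ with $s:=w''-w\in S\setminus\{0\}$. The key step is to show $s\in\Ap(S,n)$. This follows from the factorization-closure property of Apéry sets: if $x\in\Ap(S,n)$ and $x=a+b$ with $a,b\in S$, then $a,b\in\Ap(S,n)$; indeed, were $a\notin\Ap(S,n)$ we would have $a-n\in S$, whence $x-n=(a-n)+b\in S$, contradicting $x\in\Ap(S,n)$. Applying this to $w''=w+s$ (with $w,s\in S$) yields $s\in\Ap(S,n)$, and $s\neq 0$. Taking $w'=s$ then produces $w'\in\Ap(S,n)\setminus\{0\}$ with $w+w'=w''\in\Ap(S,n)$, which is exactly the negation of the right-hand condition.

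The only nontrivial ingredient is the factorization-closure of the Apéry set, which is immediate from the defining condition $\Ap(S,n)=\{s\in S\mid s-n\notin S\}$; everything else is a direct translation between the relation $\leq_S$ and membership of sums in $\Ap(S,n)$. I expect this closure lemma to be the main (though minor) obstacle, since it is the one place where the specific structure of $\Ap(S,n)$, rather than the mere order relation, is used.
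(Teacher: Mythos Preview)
Your argument is correct. The paper does not actually prove this proposition; it merely remarks that the proof is immediate, and your unwinding of $\leq_S$ together with the factorization-closure of $\Ap(S,n)$ is exactly the kind of routine verification the authors have in mind.
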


The proof of the following result is very simple.

\begin{proposition}\label{proposition10} If $S$ is a numerical semigroup and $S\neq \N,$ then $$\SG(S)=\{x\in \PF(S)\mid 2x \notin \PF(S)\}.$$
\end{proposition}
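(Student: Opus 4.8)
The plan is to prove the two set inclusions, exploiting the fact that both sides are subsets of $\PF(S)$. Since $\SG(S)=\{x\in\PF(S)\mid 2x\in S\}$ and the right-hand side is $\{x\in\PF(S)\mid 2x\notin\PF(S)\}$, it suffices to fix $x\in\PF(S)$ and establish the equivalence $2x\in S\Longleftrightarrow 2x\notin\PF(S)$.

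First I would dispatch the easy inclusion $\SG(S)\subseteq\{x\in\PF(S)\mid 2x\notin\PF(S)\}$. If $x\in\SG(S)$ then $x\in\PF(S)$ and $2x\in S$. Since every element of $\PF(S)$ is by definition a gap of $S$ (it lies in $\N\setminus S$), the membership $2x\in S$ immediately forces $2x\notin\PF(S)$, which is exactly what is required.

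The substance of the argument is the reverse inclusion, which I would prove by contraposition: assuming $x\in\PF(S)$ with $2x\notin S$, I would show $2x\in\PF(S)$, so that $2x\notin S$ and $2x\notin\PF(S)$ cannot hold simultaneously. To verify $2x\in\PF(S)$ I must check that $2x\notin S$ (true by hypothesis) and that $2x+s\in S$ for every $s\in S\setminus\{0\}$. The key manipulation is to write $2x+s=x+(x+s)$ and apply the pseudo-Frobenius property of $x$ twice: once to conclude $x+s\in S$, and a second time, with $t=x+s$, to conclude $x+(x+s)=2x+s\in S$.

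The one point requiring care --- and the only genuine obstacle --- is the second application: the property yields $x+t\in S$ only for $t\in S\setminus\{0\}$, so before invoking it with $t=x+s$ I must know $x+s\neq 0$, equivalently $x>0$ (as $s>0$). Here the hypothesis $S\neq\N$ enters, forcing $\m(S)\geq 2$ and hence that every pseudo-Frobenius number is a positive gap. Indeed, if some $x\in\PF(S)$ satisfied $x\leq 0$ then $x<0$ (since $0\in S$), and testing the defining condition against $s=\m(S)$ would force $x=-\m(S)$; but any nonzero $w\in\Ap(S,\m(S))$ satisfies $w-\m(S)\notin S$ by definition of the Ap\'ery set, contradicting $x+w=w-\m(S)\in S$. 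With $x>0$ established the second application is legitimate, and the two inclusions combine to give the claimed equality; note that for $S=\N$ the statement genuinely fails, since under the given definition $-1\in\PF(\N)$ lies in the right-hand side but not in $\SG(\N)$, which is precisely what the hypothesis $S\neq\N$ rules out.
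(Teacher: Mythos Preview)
Your proof is correct. The paper does not actually supply a proof of this proposition---it merely remarks that ``the proof of the following result is very simple'' and moves on---so there is nothing to compare against. Your reduction to the equivalence $2x\in S\Longleftrightarrow 2x\notin\PF(S)$ for $x\in\PF(S)$, with the reverse implication handled by writing $2x+s=x+(x+s)$ and applying the pseudo-Frobenius condition twice, is the natural argument. Your careful justification that every $x\in\PF(S)$ is positive when $S\neq\N$ (and your observation that the statement genuinely fails for $S=\N$) is more detail than the paper provides. One tiny quibble: the phrase ``$x+s\neq 0$, equivalently $x>0$'' is not literally an equivalence for a fixed $s$, but this is harmless, since establishing $x>0$ is exactly what is needed and what you do.
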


\begin{remark}\label{remark11}
	Observe that as a consequence from Propositions \ref{proposition8}, \ref{proposition9} and \ref{proposition10}, if $S$ is a numerical semigroup and we know the set $\Ap(S,n)$ for some $n\in S\backslash \{0\},$ then we easily can calculate the set $\SG(S).$
\end{remark}

The following result is well known as well as it is very easy to prove.

\begin{proposition}\label{proposition12} Let $S$ and $T$ be numerical semigroups and $x\in S.$ Then the following hold:
	\begin{enumerate}
		\item[1)] $S\cap T$ is a numerical semigroup and $\F(S\cap T)=\max\{\F(S), \F(T)\}.$
		\item[2)] $S\backslash \{x\}$ is a numerical semigroup if and only if $x\in \msg(S).$
		\item[3)] $\m(S)=\min\left( \msg(S)\right).$
	\end{enumerate}	
\end{proposition}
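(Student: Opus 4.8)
The plan is to treat the three parts independently, in each case reducing to the definition of a numerical semigroup as a subset of $\N$ containing $0$, closed under addition, and with finite complement.

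For part 1) I would first verify the three defining properties for $S\cap T$: it contains $0$ since both $S$ and $T$ do; it is closed under addition because $a,b\in S\cap T$ forces $a+b\in S$ and $a+b\in T$; and its complement is finite since $\N\backslash(S\cap T)=(\N\backslash S)\cup(\N\backslash T)$ is a union of two finite sets. For the Frobenius number I would observe that an integer $x$ fails to lie in $S\cap T$ exactly when $x\notin S$ or $x\notin T$. Any $x>\max\{\F(S),\F(T)\}$ exceeds both Frobenius numbers, hence lies in both $S$ and $T$ and therefore in $S\cap T$; on the other hand $\max\{\F(S),\F(T)\}$ is missing from whichever of $S,T$ attains the maximum, so it is missing from $S\cap T$. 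This pins it down as $\F(S\cap T)$.

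For part 2) I would use that $x\in\msg(S)$ is equivalent to $x\in S\backslash\{0\}$ together with $x$ not being expressible as $a+b$ with $a,b\in S\backslash\{0\}$. For the implication from right to left, note $x\neq 0$, so $0\in S\backslash\{x\}$, and the complement $(\N\backslash S)\cup\{x\}$ remains finite; the only thing left is closure, and if $a,b\in S\backslash\{x\}$ gave $a+b=x$ then neither summand could be $0$ (else the other would equal $x$), contradicting the minimality of $x$. For the converse I would argue by contraposition: if $x\notin\msg(S)$ then, since $x\in S$, either $x=0$, in which case $S\backslash\{x\}$ omits $0$, or $x=a+b$ with $a,b\in S\backslash\{0\}$; in the latter case $a\neq x$ and $b\neq x$, so $a,b\in S\backslash\{x\}$ while $a+b=x\notin S\backslash\{x\}$, and in either case $S\backslash\{x\}$ fails to be a numerical semigroup.

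For part 3) I would use $\m(S)=\min(S\backslash\{0\})$ together with $\msg(S)\subseteq S\backslash\{0\}$, which gives $\m(S)\leq\min(\msg(S))$ immediately; for the reverse inequality it suffices to check $\m(S)\in\msg(S)$, which holds because $\m(S)$ cannot be written as $a+b$ with $a,b\in S\backslash\{0\}$, any such sum being at least $2\m(S)>\m(S)$. Hence $\m(S)=\min(\msg(S))$. The arguments are entirely elementary; the only point needing a little care is the closure step in part 2), where one must confirm that the two summands of $x$ are genuinely distinct from $x$ itself, so that they persist in $S\backslash\{x\}$.
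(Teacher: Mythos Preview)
Your proof is correct in all three parts. The paper itself does not supply a proof of this proposition; it merely remarks that the result ``is well known as well as it is very easy to prove'' and moves on. Your argument fills in exactly those elementary details and is entirely in line with what the authors intend, so there is nothing to compare against.
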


The following result is Lemma 2.14 of \cite{libro}.
\begin{proposition}\label{proposition13}
	If $S$ is a numerical semigroup, then $\frac{\F(S)+1}{2}\leq \g(S).$
\end{proposition}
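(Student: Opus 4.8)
The plan is to prove that $\frac{\F(S)+1}{2}\leq \g(S)$ for any numerical semigroup $S$ by a pairing argument on the gaps of $S$. Recall that $\g(S)$ counts the gaps, i.e.\ the elements of $\N\backslash S$, and that all gaps lie in the interval $\{1,2,\dots,\F(S)\}$ since $\F(S)$ is the largest gap and every integer exceeding $\F(S)$ belongs to $S$. Thus the genus is the number of gaps inside $\{1,\dots,\F(S)\}$, a set of $\F(S)$ integers.

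The key observation is the following: for each integer $x$ with $1\leq x\leq \F(S)$, at least one of the two numbers $x$ and $\F(S)-x$ must be a gap of $S$. Indeed, suppose to the contrary that both $x\in S$ and $\F(S)-x\in S$. Since $S$ is closed under addition and $0\in S$, the case $\F(S)-x=0$ would force $x=\F(S)\in S$, contradicting that $\F(S)$ is a gap; so we may assume both lie in $S\backslash\{0\}$ or handle the boundary directly. Then $x+(\F(S)-x)=\F(S)\in S$, again contradicting $\F(S)\notin S$. Hence for every such $x$, not both of $x$ and $\F(S)-x$ belong to $S$, so at least one is a gap.

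First I would make this pairing precise. Consider the involution $x\mapsto \F(S)-x$ on the set $\{0,1,\dots,\F(S)\}$, or more conveniently pair up $\{1,\dots,\F(S)-1\}$ together with treating the endpoints. The cleanest formulation: for each $x\in\{1,\dots,\F(S)\}$ the pair $\{x,\F(S)-x\}$ contains a gap, and as $x$ ranges over $\{1,\dots,\F(S)\}$ these pairs cover everything, with each pair counted at most twice. Counting gaps this way, the number of gaps is at least half the number of integers in $\{1,\dots,\F(S)\}$; that is, $\g(S)\geq \frac{\F(S)}{2}$. To sharpen this to $\frac{\F(S)+1}{2}$, I would separately account for the fixed point: when $\F(S)$ is even, the value $x=\F(S)/2$ is its own partner, and it contributes a forced gap (otherwise $\frac{\F(S)}{2}+\frac{\F(S)}{2}=\F(S)\in S$), which boosts the count by the needed half-integer; when $\F(S)$ is odd the $\frac{\F(S)+1}{2}$ pairs $\{x,\F(S)-x\}$ are genuinely disjoint and each yields a distinct gap.

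The main obstacle, and the only subtlety, is bookkeeping at the endpoints and the fixed point so that the bound is $\frac{\F(S)+1}{2}$ rather than merely $\frac{\F(S)}{2}$. Concretely I expect the cleanest route is to split on the parity of $\F(S)$: in the odd case exhibit $\frac{\F(S)+1}{2}$ disjoint two-element pairs each containing a gap; in the even case exhibit $\frac{\F(S)}{2}$ disjoint pairs plus the forced singleton gap at $\F(S)/2$, again totalling at least $\frac{\F(S)+1}{2}$ gaps. Once the disjointness and the forced-gap claim are checked in each case, the inequality $\frac{\F(S)+1}{2}\leq \g(S)$ follows immediately, since each such gap lies in $\N\backslash S$ and distinct pairs give distinct gaps.
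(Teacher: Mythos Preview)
Your argument is correct. The pairing $x\leftrightarrow \F(S)-x$ on $\{0,1,\dots,\F(S)\}$, together with the observation that $x\in S$ and $\F(S)-x\in S$ would force $\F(S)\in S$, yields at least one gap in each pair; the parity split you describe then gives exactly the bound $\g(S)\ge\frac{\F(S)+1}{2}$. This is the standard proof.

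There is nothing to compare against in the paper itself: the authors do not prove this proposition but merely quote it as Lemma~2.14 of \cite{libro}. Your write-up therefore supplies what the paper omits. One minor presentational point: rather than ranging over $\{1,\dots,\F(S)\}$ and worrying about the boundary $\F(S)-x=0$ separately, it is slightly cleaner to work on the symmetric set $\{0,1,\dots,\F(S)\}$ from the start, since the pair $\{0,\F(S)\}$ automatically contributes the gap $\F(S)$ and the involution has no exceptional cases there. But this is cosmetic; the substance of your argument is complete.
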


\section{The tree associated to $\Sat(F)$}

Our first aim in this section will be to prove that if $F$ is a positive integer, then the set $\Sat(F)=\{S\mid S \mbox{ is a saturated numerical semigroup and }\F(S)=F\}$ is a covariety.\\

The following result can be consulted in \cite[Proposition 5]{houston1}.

\begin{lemma}\label{lemma14} If $S$ and $T$ are saturated numerical semigroups, then $S\cap T$ is also a saturated numerical semigroup.
\end{lemma}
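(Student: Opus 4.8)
The plan is to use Proposition \ref{proposition12}(1), which already gives that $S\cap T$ is a numerical semigroup, and then verify the saturation condition directly. So I fix $s\in (S\cap T)\setminus\{0\}$, write $e=\d_{S\cap T}(s)$, and aim for $s+e\in S\cap T$. First I would record the elementary monotonicity of the $\d$-function: since $\{x\in S\cap T\mid x\leq s\}\subseteq \{x\in S\mid x\leq s\}$, the gcd of the larger set divides the gcd of the smaller one, so $\d_S(s)\mid \d_{S\cap T}(s)=e$, and likewise $\d_T(s)\mid e$. Thus $e$ is a common multiple of $\d_S(s)$ and of $\d_T(s)$, and the whole lemma reduces to the following key claim, applied to $S$ and to $T$ separately.

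Key claim: if $R$ is a saturated numerical semigroup and $r\in R\setminus\{0\}$, then $r+k\,\d_R(r)\in R$ for every $k\in\N$. Granting this and taking $R=S$, $r=s$, $k=e/\d_S(s)$ gives $s+e\in S$, and symmetrically $s+e\in T$, whence $s+e\in S\cap T$ and $S\cap T$ is saturated, as desired.

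The hard part is the Key claim, because the naive induction on $k$ fails: the step size $\d_R$ can strictly shrink as one moves up in $R$, so adding $\d_R(r)$ repeatedly is not directly available. I would prove it by a minimal-counterexample argument with a two-level minimality. Assume the claim fails in $R$ and, among all bad pairs $(r,k)$ with $k\geq 1$ and $r+k\,\d_R(r)\notin R$, choose one for which the value $t=r+k\,\d_R(r)$ is least, and, among those, for which $g=\d_R(r)$ is least (both minima exist since values and steps are positive integers). Saturation rules out $k=1$, since $r+\d_R(r)\in R$; hence $k\geq 2$, and then $r'=t-g=r+(k-1)g$ must lie in $R$, for otherwise $(r,k-1)$ would be a bad pair of strictly smaller value $t-g$. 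Since $r'>r$, monotonicity gives $g'=\d_R(r')\mid g$. Now the pair $(r',g/g')$ has value $r'+(g/g')\,g'=r'+g=t\notin R$, so it is again a bad pair of value $t$ but with step $g'\leq g$; minimality of $g$ forces $g'=g$, i.e. $g/g'=1$. Hence $t=r'+\d_R(r')$, contradicting saturation of $R$. This contradiction establishes the Key claim and therefore the lemma.
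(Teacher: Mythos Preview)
Your argument is correct. The paper itself does not supply a proof of this lemma: it simply cites \cite[Proposition~5]{houston1}. So there is no ``paper's own proof'' to compare against line by line.

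That said, it is worth pointing out that your Key claim --- that $r + k\,\d_R(r)\in R$ for every $k\in\N$ whenever $R$ is saturated and $r\in R\setminus\{0\}$ --- is precisely the implication $1)\Rightarrow 3)$ of Lemma~\ref{lemma39}, which the paper also quotes (without proof) from \cite[Theorem~4]{houston1}. Once Lemma~\ref{lemma39} is available, the reduction you give (via $\d_S(s)\mid \d_{S\cap T}(s)$ and $\d_T(s)\mid \d_{S\cap T}(s)$) is the standard one-line route to Lemma~\ref{lemma14}. What you have effectively done is supply a self-contained proof of the relevant part of Lemma~\ref{lemma39} by a neat double minimality (first on the value $t$, then on the step $g$), which the paper does not attempt. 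Your minimal-counterexample argument is sound: the delicate point --- that the step $\d_R$ may shrink and so naive induction fails --- is exactly what the secondary minimality on $g$ handles, forcing $g'=g$ and hence $t=r'+\d_R(r')$, a contradiction.

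In short: the paper outsources both this lemma and your Key claim to \cite{houston1}; you have instead given an elementary, self-contained proof. If you wished to align with the paper's exposition, you could simply invoke Lemma~\ref{lemma39} for the Key claim and keep the short $\gcd$-divisibility reduction.
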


The following result has an immediate proof.

\begin{lemma}\label{lemma15}Let $F$ be a positive integer. Then  the following properties are verified:
	\begin{enumerate}
		\item[1)] If $m\in \N,$ then $\Delta(m)=\{0,m,\rightarrow\}$, is a saturated numerical semigroup.
		\item[2)]$\Delta(F+1)$ is the minimun of  $\Sat(F).$
		\item[3)] If $S$ is a saturated numerical semigroup, then $S\backslash \{\m(S)\}$ is also a saturated numerical semigroup.
	\end{enumerate}
\end{lemma}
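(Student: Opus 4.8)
The plan is to dispatch the three parts in order, with essentially all of the work concentrated in part~3. For part~1, I would first check that $\Delta(m)=\{0,m,\rightarrow\}$ is a numerical semigroup: it contains $0$, its complement $\{1,\dots,m-1\}$ is finite, and it is closed under addition since a sum of two nonzero elements is at least $2m\geq m$. For saturation I would compute $\d_{\Delta(m)}(s)$ for $s\in\Delta(m)\setminus\{0\}$: when $s=m$ the only positive element up to $s$ is $m$, so $\d_{\Delta(m)}(m)=m$ and $m+m=2m\in\Delta(m)$; when $s\geq m+1$ the positive elements up to $s$ include the consecutive integers $m,m+1$, so $\d_{\Delta(m)}(s)=1$ and $s+1\in\Delta(m)$ (the degenerate cases $m\in\{0,1\}$ give $\Delta(m)=\N$, trivially saturated). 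For part~2, part~1 already shows $\Delta(F+1)$ is saturated, and its gaps are exactly $\{1,\dots,F\}$, so $\F(\Delta(F+1))=F$ and $\Delta(F+1)\in\Sat(F)$; to see it is the minimum I would take any $S\in\Sat(F)$ and note that $\F(S)=F$ forces every integer greater than $F$ into $S$, while $0\in S$, whence $\Delta(F+1)=\{0\}\cup\{F+1,\rightarrow\}\subseteq S$.

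For part~3, write $m=\m(S)$ and $S'=S\setminus\{m\}$. That $S'$ is a numerical semigroup is immediate from Proposition~\ref{proposition12}: by item~3, $m=\min(\msg(S))\in\msg(S)$, so by item~2 the set $S\setminus\{m\}$ is a numerical semigroup. The content is to show $S'$ is saturated. Fix $s\in S'\setminus\{0\}$; since $m$ is the least positive element of $S$ and has been removed, $s>m$. Writing $e=\d_{S'}(s)=\gcd\{x\in S\mid 0<x\leq s,\ x\neq m\}$ and $d=\d_{S}(s)=\gcd\{x\in S\mid 0<x\leq s\}$, deletion of the single element $m\leq s$ gives $d=\gcd(e,m)$, so $d\mid e$, say $e=cd$. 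The target $s+e=s+cd$ exceeds $m$ and so cannot equal $m$; thus it suffices to prove $s+cd\in S$.

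The main obstacle is precisely that $e$ may be strictly larger than $d$: deleting $m$ can raise the $\gcd$, so the one-step hypothesis $s+d\in S$ does not by itself yield $s+e\in S$. (A concrete instance is the saturated semigroup $S=\{0,8,12,15,16,\rightarrow\}$, where $\d_S(15)=1$ but $\d_{S\setminus\{8\}}(15)=3$.) I would overcome this by first upgrading the definition to the stronger statement that in a saturated semigroup $s+k\,\d_S(s)\in S$ for every $k\in\N$. Granting this, the case $k=c$ gives $s+cd=s+e\in S$ and finishes part~3. To prove the upgrade I would use a double induction: an outer induction on $\d_S(s)$ ordered by divisibility (finitely many divisors), and an inner induction on $k$. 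In the inner step, with $t=s+k\,\d_S(s)\in S$, one has $\d_S(t)\mid\d_S(s)$; if equality holds, one step of saturation gives $t+\d_S(s)\in S$, whereas if $\d_S(t)$ is a proper divisor of $\d_S(s)$ the outer hypothesis applies to $t$ and, taking the multiple $\d_S(s)/\d_S(t)$, again produces $t+\d_S(s)\in S$. The base case $\d_S(s)=1$ is the easy observation that $\d_S$ then remains $1$ above $s$, so all of $s,s+1,s+2,\dots$ lie in $S$.
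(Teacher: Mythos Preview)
Your argument is correct in all three parts. The paper itself supplies no proof of Lemma~\ref{lemma15} (it is declared to have ``an immediate proof''), so there is no detailed approach to compare against; the authors evidently have in mind the characterizations from \cite{houston1} that they quote later as Lemmas~\ref{lemma37} and~\ref{lemma39}. In particular, via Lemma~\ref{lemma37} part~3) is genuinely vacuous: taking $x=\m(S)$, there is no $y\in S\setminus\{0\}$ with $y<x$, so the criterion for $S\setminus\{x\}$ to be saturated holds trivially.

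Your route is different in that it is self-contained and does not appeal to those external results. You correctly isolate the only nontrivial point---that removing $\m(S)$ can strictly increase $\d$, as your example $S=\{0,8,12,15,16,\rightarrow\}$ shows---and you resolve it by proving the ``upgrade'' $s+k\,\d_S(s)\in S$ for all $k\in\N$, which is exactly the implication $1)\Rightarrow 3)$ of Lemma~\ref{lemma39}. Your double induction (outer on $\d_S(s)$, inner on $k$) is sound: at the inner step $t=s+k\,\d_S(s)\in S$ has $\d_S(t)\mid \d_S(s)$, and either equality gives $t+\d_S(s)\in S$ by one saturation step, or a strict divisor lets you invoke the outer hypothesis with multiplier $\d_S(s)/\d_S(t)$. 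The payoff of your approach is a proof that does not forward-reference results stated only in Section~5; the payoff of the paper's implicit approach is brevity, since with Lemma~\ref{lemma37} in hand part~3) is a one-line observation.
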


By applying Proposition \ref{proposition12}; and Lemmas \ref{lemma14} and \ref{lemma15}, we  can easily deduce the following result.
\begin{proposition}\label{proposition16}
	If $F$ is a positive integer, then $\Sat(F)$ is a covariety.
\end{proposition}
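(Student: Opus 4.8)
The plan is to verify directly that $\Sat(F)$ satisfies the three defining conditions of a covariety, drawing on the lemmas already established. The strategy is entirely structural: each of the three axioms corresponds to one of the preceding results, so the proof amounts to assembling them correctly rather than proving anything substantively new.

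First I would address condition~1), the existence of a minimum. By part~2) of Lemma~\ref{lemma15}, $\Delta(F+1)=\{0,F+1,\rightarrow\}$ is the minimum of $\Sat(F)$ with respect to set inclusion, so $\Delta(\Sat(F))=\Delta(F+1)$. In particular this also shows $\Sat(F)\neq\emptyset$, which is needed since a covariety is by definition a nonempty family. Next, for condition~2), I would take $\{S,T\}\subseteq\Sat(F)$ and show $S\cap T\in\Sat(F)$. By Lemma~\ref{lemma14}, $S\cap T$ is again a saturated numerical semigroup, and by part~1) of Proposition~\ref{proposition12}, $\F(S\cap T)=\max\{\F(S),\F(T)\}=\max\{F,F\}=F$; hence $S\cap T\in\Sat(F)$. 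Finally, for condition~3), I would take $S\in\Sat(F)$ with $S\neq\Delta(\Sat(F))=\Delta(F+1)$ and show $S\backslash\{\m(S)\}\in\Sat(F)$. Part~3) of Lemma~\ref{lemma15} gives that $S\backslash\{\m(S)\}$ is saturated, so the only point requiring care is that its Frobenius number is still $F$.

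The one genuinely non-routine point, and the step I expect to be the main obstacle, is verifying that removing the multiplicity does not change the Frobenius number in condition~3). The subtlety is that $\F(S\backslash\{\m(S)\})=F$ can fail precisely when $\m(S)$ happens to exceed $F$, i.e.\ when $S=\Delta(F+1)$ already has its multiplicity as the smallest nonzero element above $F$; this is exactly the case excluded by the hypothesis $S\neq\Delta(F+1)$. So I would argue that if $S\neq\Delta(F+1)$, then $\m(S)\leq F$ (since otherwise $S$ would contain $\{0\}\cup\{F+1,\rightarrow\}$ and equal $\Delta(F+1)$), and therefore removing $\m(S)$ leaves all integers greater than $F$ in the semigroup while $F$ itself remains a gap; hence the Frobenius number is unchanged and equals $F$. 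One should also note that $S\backslash\{\m(S)\}$ is a numerical semigroup at all, which follows from part~2) of Proposition~\ref{proposition12} together with part~3) of Proposition~\ref{proposition12} identifying $\m(S)$ as a minimal generator.

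Having checked all three axioms, I would conclude that $\Sat(F)$ is a covariety. The overall proof is short precisely because the difficult analytic content---stability of saturation under intersection and under deletion of the multiplicity---has been isolated into Lemmas~\ref{lemma14} and~\ref{lemma15}, leaving only the bookkeeping of Frobenius numbers via Proposition~\ref{proposition12}.
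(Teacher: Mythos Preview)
Your proposal is correct and follows exactly the approach the paper indicates: the paper's proof is simply the one-line remark that the result is easily deduced from Proposition~\ref{proposition12} and Lemmas~\ref{lemma14} and~\ref{lemma15}, and you have faithfully unpacked those deductions. In particular, your careful justification that $\F(S\backslash\{\m(S)\})=F$ when $S\neq\Delta(F+1)$ is precisely the bookkeeping via Proposition~\ref{proposition12} that the paper leaves implicit.
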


A {\it graph} $G$ is a pair $(V,E)$ where $V$ is a nonempty set and
$E$ is a subset of $\{(u,v)\in V\times V \mid u\neq v\}$. The
elements of $V$ and $E$ are called {\it vertices} and {\it edges},
respectively. A {\it path, of
	length $n$,} connecting the vertices $x$ and $y$ of $G$ is a
sequence of different edges of the form $(v_0,v_1),
(v_1,v_2),\ldots,(v_{n-1},v_n)$ such that $v_0=x$ and $v_n=y$.

A graph $G$ is {\it a tree} if there exists a vertex $r$ (known as
{\it the root} of $G$) such that for any other vertex $x$ of $G,$
there exists a unique path connecting $x$ and $r$. If  $(u,v)$ is an
edge of the tree $G$, we say that $u$ is a {\it child} of $v$.\\

Define the graph $\G(F)$ as follows:

\begin{itemize}
	\item the set of vertices of $\G(F)$ is $\Sat(F)$,
	\item $(S,T)\in \Sat(F) \times \Sat(F)$ is an edge of $\G(F)$ if and only if $T=S\backslash \{\m(S)\}.$
\end{itemize}

As a consequence from  \cite[Proposition 2.6]{covariedades} and Proposition \ref{proposition16}, we have the following result.
\begin{proposition}\label{proposition17}Let $F$ be a positive integer. Then
	$\G(F)$ is a tree with root $\Delta(F+1).$	
\end{proposition}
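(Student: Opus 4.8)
The final statement to prove is Proposition~\ref{proposition17}, which asserts that $\G(F)$ is a tree with root $\Delta(F+1)$. The plan is to invoke the general machinery already developed for covarieties in \cite[Proposition 2.6]{covariedades} together with Proposition~\ref{proposition16}, which establishes that $\Sat(F)$ is a covariety. Since the cited result presumably says precisely that the graph associated to any covariety (with edges given by the operation $S\mapsto S\backslash\{\m(S)\}$) is a tree rooted at the minimum of the covariety, the bulk of the work is reduced to checking that the hypotheses of that abstract result are met and that its conclusion specializes correctly here.

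First I would recall from Lemma~\ref{lemma15}(2) that $\Delta(F+1)$ is the minimum of $\Sat(F)$, so it is the natural candidate for the root. Then I would verify, directly from the definition of $\G(F)$, that its edge relation coincides with the edge relation used in the abstract covariety tree of \cite[Proposition 2.6]{covariedades}: an edge runs from $S$ to $T$ exactly when $T=S\backslash\{\m(S)\}$, which by condition (3) in the definition of covariety is a legitimate covariety member whenever $S\neq\Delta(\Sat(F))$. This identification is what lets me transport the cited conclusion verbatim.

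The substantive content, which I would either cite or reprove, is that every vertex $S\in\Sat(F)$ is connected to the root by a unique finite path. For existence of a path, I would iterate the map $S\mapsto S\backslash\{\m(S)\}$, observing that each application strictly decreases the genus $\g(S)$ (we remove exactly one element, turning it into a gap), so after finitely many steps we must reach $\Delta(F+1)$; Lemma~\ref{lemma15}(3) guarantees that each intermediate set stays inside $\Sat(F)$, and the Frobenius number is unchanged because the removed multiplicity is well below $F$. For uniqueness, the key point is that each non-root vertex $S$ has a \emph{single} outgoing edge, namely to $S\backslash\{\m(S)\}$, since the multiplicity $\m(S)$ is uniquely determined; hence there is no branching as one moves toward the root, forcing the path to be unique.

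The main obstacle, and the only place demanding care, is confirming that removing the multiplicity keeps us within $\Sat(F)$ with the Frobenius number intact and that the process terminates at the correct root rather than leaving the family prematurely. This is exactly what the covariety axioms were designed to guarantee: axiom (3) keeps $S\backslash\{\m(S)\}$ saturated (Lemma~\ref{lemma15}(3)), and since $\m(S)<F$ for any $S\neq\Delta(F+1)$, deleting it cannot alter $\F(S)=F$. Once these bookkeeping facts are in place, the statement follows immediately from the abstract result, so in the write-up I would simply cite \cite[Proposition 2.6]{covariedades} and Proposition~\ref{proposition16}, noting that the root is $\Delta(F+1)$ by Lemma~\ref{lemma15}(2).
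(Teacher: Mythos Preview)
Your proposal is correct and follows exactly the paper's own approach: the paper simply states the result as an immediate consequence of \cite[Proposition~2.6]{covariedades} together with Proposition~\ref{proposition16}, and you do the same, additionally spelling out the underlying existence/uniqueness argument and identifying the root via Lemma~\ref{lemma15}(2). There is nothing to add.
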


A tree can be  built recurrently starting from the root  and connecting, 
through an edge, the vertices already built with  their  children. Hence, it is very interesting to characterize the children of an arbitrary vertex of $\G(F).$ For this reason, we will introduce some concepts and results.\\

The following result is deduced from Proposition \ref{proposition16} and \cite[Proposition 2.9]{covariedades}.
\begin{proposition}\label{proposition18}
	If $S\in \Sat(F),$ then the set formed by the children of $S,$ in the tree $\G(F),$ is the set
	$$
	\{S\cup \{x\}\mid x\in \SG(S),\, x<\m(S) \mbox{ and }S\cup \{x\}\in \Sat(F)\}.
	$$
\end{proposition}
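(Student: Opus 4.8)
The plan is to characterize the children of a vertex $S$ in the tree $\G(F)$ by combining the general covariety structure theorem cited as \cite[Proposition 2.9]{covariedades} with the specific requirement that the resulting semigroup stay inside $\Sat(F)$. Since Proposition \ref{proposition16} establishes that $\Sat(F)$ is a covariety, the abstract result from \cite{covariedades} applies directly, and the work amounts to unpacking what that result says in this concrete setting and verifying that the saturation/Frobenius constraints are correctly encoded.

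First I would recall the edge relation: $(T,S)$ is an edge exactly when $T = S \backslash \{\m(S)\}$, so the children of $S$ are precisely those $T \in \Sat(F)$ with $T \backslash \{\m(T)\} = S$. Equivalently, a child $T$ is obtained from $S$ by adjoining a single element $x$ that becomes the new multiplicity of $T$; thus $T = S \cup \{x\}$ with $x = \m(T) < \m(S)$. The element $x$ must be a gap of $S$ whose adjunction yields a numerical semigroup, which by Proposition \ref{proposition5} means exactly $x \in \SG(S)$. Combining these observations gives the three conditions: $x \in \SG(S)$ (so $S \cup \{x\}$ is a numerical semigroup), $x < \m(S)$ (so that $x$ is genuinely the new multiplicity and removing it returns $S$), and $S \cup \{x\} \in \Sat(F)$ (so the child lies in the covariety).

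Next I would verify the two inclusions. For the forward direction, if $T$ is a child of $S$, then $T \in \Sat(F)$ and $S = T \backslash \{\m(T)\}$, whence $T = S \cup \{\m(T)\}$; setting $x = \m(T)$, Proposition \ref{proposition12}(2) and Proposition \ref{proposition5} force $x \in \SG(S)$, while $x = \m(T) < \m(S)$ follows because removing the multiplicity of $T$ strictly increases the smallest nonzero element. For the reverse direction, given $x \in \SG(S)$ with $x < \m(S)$ and $S \cup \{x\} \in \Sat(F)$, I would check that $\m(S \cup \{x\}) = x$ (immediate from $x < \m(S)$ and $x \in S \cup \{x\}$), so that $(S\cup\{x\}) \backslash \{x\} = S$, exhibiting $S \cup \{x\}$ as a child of $S$.

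The main obstacle is essentially bookkeeping rather than a deep difficulty: the heavy lifting is done by the cited general covariety result \cite[Proposition 2.9]{covariedades} together with Proposition \ref{proposition16}, so the only real content is confirming that the abstract children-description specializes to exactly the stated set. The one point requiring a little care is the condition $S \cup \{x\} \in \Sat(F)$, which cannot be dropped or simplified: adjoining a special gap below the multiplicity preserves the Frobenius number (since $x < \m(S) \le F$, the Frobenius number of $S \cup \{x\}$ remains $F$), but it need not preserve saturation, so membership in $\Sat(F)$ must be retained as an explicit hypothesis. I would therefore emphasize that, unlike in the ambient family of all numerical semigroups with Frobenius number $F$, here the saturation constraint genuinely prunes some candidates, and the statement correctly accounts for this by the clause $S \cup \{x\} \in \Sat(F)$.
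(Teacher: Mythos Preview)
Your proposal is correct and follows exactly the paper's approach: the paper simply states that the result is deduced from Proposition~\ref{proposition16} and \cite[Proposition~2.9]{covariedades}, without giving any further argument. Your write-up is in fact more detailed than the paper's one-line deduction, carefully unpacking the two inclusions and the role of Proposition~\ref{proposition5}, but the underlying route is identical.
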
	

Let $S\in \Sat(F)$ and $x\in \SG(S)$ such that $x<\m(S)$ and $x\neq F.$ The following result provides us an algorithm to decide if  $S\cup \{x\}$ belongs to $\Sat(F).$

\begin{proposition}\label{proposition19}
Let $S\in \Sat(F),$  $x\in \SG(S)$ such that $x<\m(S)$ and $x\neq F.$ Then $S\cup \{x\}\in \Sat(F)$ if and only if $s+\d_{S\cup \{x\}}(s)\in S$ for every $s\in \{\m(S),\cdots, \m(S)+x\}.$
\end{proposition}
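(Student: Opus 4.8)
The plan is to set $T=S\cup\{x\}$ and reduce membership in $\Sat(F)$ to the saturation property alone. First I would note that $x\in\SG(S)$ gives, by Proposition \ref{proposition5}, that $T$ is a numerical semigroup; moreover $x$ is a gap of $S$ (since $x<\m(S)$) and $x\neq F$, so $F\notin T$ and $F$ remains the largest gap of $T$, whence $\F(T)=F$. Consequently $T\in\Sat(F)$ if and only if $T$ is saturated, i.e. if and only if $t+\d_T(t)\in T$ for every $t\in T\setminus\{0\}=\{x\}\cup(S\setminus\{0\})$. Since the only elements of $T$ not exceeding $x$ are $0$ and $x$, we have $\d_T(x)=x$, so $x+\d_T(x)=2x\in S\subseteq T$ (here $2x\in S$ precisely because $x\in\SG(S)$); thus the saturation condition at $t=x$ holds automatically, and only the elements $t\in S\setminus\{0\}$ remain to be examined.

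For such a $t$ we have $t\ge\m(S)>x$, so $t+\d_T(t)>x$ and therefore $t+\d_T(t)\in T$ if and only if $t+\d_T(t)\in S$; moreover $\d_T(t)=\gcd\big(x,\d_S(t)\big)$, because the elements of $T$ below $t$ are those of $S$ below $t$ together with $x$. The crucial step is to show that for every $t\in S$ with $t\ge\m(S)+x$ the condition is automatic. Here I would argue that, since $x,\m(S)\in T$ and $T$ is closed under addition, $\m(S)+x\in T$, and being larger than $x$ it lies in $S$; hence for $t\ge\m(S)+x$ both $\m(S)$ and $\m(S)+x$ belong to $\{y\in S\mid 0<y\le t\}$, which forces $\d_S(t)\mid\gcd(\m(S),\m(S)+x)=\gcd(\m(S),x)$, and in particular $\d_S(t)\mid x$. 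Therefore $\d_T(t)=\gcd(x,\d_S(t))=\d_S(t)$, and since $S$ is saturated, $t+\d_T(t)=t+\d_S(t)\in S$. I expect this divisibility reduction to be the main obstacle, as it is what collapses the infinitely many saturation conditions for $T$ onto the finite window $\{\m(S),\dots,\m(S)+x\}$.

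Finally I would assemble the equivalence. Recalling that $\d_T(s)$ is only defined for $s\in T$ and that every $s\in\{\m(S),\dots,\m(S)+x\}$ lying in $T$ actually lies in $S$ (as such $s$ exceed $x$), the conditions to be checked are exactly $s+\d_T(s)\in S$ for $s\in S\cap\{\m(S),\dots,\m(S)+x\}$. For the direct implication, if $T\in\Sat(F)$ then $T$ is saturated and each such $s$ satisfies $s+\d_T(s)\in T$, hence $s+\d_T(s)\in S$. Conversely, if these finitely many conditions hold, then together with the automatic cases $t=x$ (first paragraph) and $t\ge\m(S)+x$ (second paragraph) we obtain $t+\d_T(t)\in T$ for all $t\in T\setminus\{0\}$, so $T$ is saturated and $T\in\Sat(F)$. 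This yields the stated equivalence.
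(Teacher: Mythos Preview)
Your argument is correct and follows the same route as the paper: the heart of both proofs is the observation that for $s>\m(S)+x$ one has $\d_S(s)=\d_{S\cup\{x\}}(s)$, obtained by noting that $\m(S)+x\in S$ so that $\d_S(s)\mid x$. You simply supply details the paper leaves implicit (that $\F(T)=F$, that the saturation condition at $t=x$ is automatic because $2x\in S$, and that the range $\{\m(S),\dots,\m(S)+x\}$ is to be read as its intersection with $S$).
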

\begin{proof}
	{\it Necessity.} Trivial. \\
{\it Sufficiency.} We have to prove that if $s\in S$ and $s>\m(S)+x,$ then 	$s+\d_{S\cup \{x\}}(s)\in S.$ Hence it is enough to show that $\d_{S}(s)=\d_{S\cup \{x\}}(s).$ But it is true because $\d_{S}(s)=\gcd\{ \m(S),\cdots, \m(S)+x,\cdots,s\}=\gcd\{x, \m(S),\cdots,s\}=\d_{S\cup \{x\}}(s).$
\end{proof}

\begin{example}\label{example20}
	It is clear that $S=\{0,8,10,12,14,16,18,\rightarrow\}\in \Sat(17)$ and $6\in \SG(S).$\\
	 As 
	$$
	\{8+\d_{S\cup \{6\}}(8),10+\d_{S\cup \{6\}}(10),12+\d_{S\cup \{6\}}(12), 14+\d_{S\cup \{6\}}(14) \}=$$
	$$\{8+2,10+2,12+2,14+2\}\subseteq S,
	$$
	then by applying Proposition \ref{proposition19}, we have that $S\cup \{6\}\in \Sat(17).$
\end{example}
The next proposition is  Proposition 4.6 of \cite{coarf}.
\begin{proposition}\label{proposition21} Let $S$ be a numerical semigroup and $x\in \SG(S)$ such that $x<\m(S)$ and $S\cup \{x\}$ is a $\MED$-semigroup. Then the following conditions   hold.
	\begin{enumerate}
		\item[1)] For  every $i\in \{1,\dots, x-1\}$ there is $a\in \msg(S)$ such that $a\equiv i\, (\mbox{mod } x).$
		\item[2)] If $\alpha(i)=\min\{a\in \msg(S)\mid a\equiv i\, (\mbox{mod } x) \}$ for all $i\in \{1,\dots, x-1\},$ then $\msg(S\cup \{x\})=\{x,\alpha(1),\dots,\alpha(x-1)\}.$
	\end{enumerate}
\end{proposition}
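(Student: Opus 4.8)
The statement to prove is Proposition~\ref{proposition21}, concerning a numerical semigroup $S$ and a special gap $x \in \SG(S)$ with $x < \m(S)$ such that $S \cup \{x\}$ is a $\MED$-semigroup. We must establish two facts: (1) for every residue class $i \in \{1, \dots, x-1\}$ modulo $x$, some minimal generator of $S$ lies in that class, and (2) the minimal system of generators of $S \cup \{x\}$ consists of $x$ together with the smallest minimal generator of $S$ in each nonzero residue class modulo $x$.

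The plan is to exploit Proposition~\ref{proposition7}, which characterizes $\MED$-semigroups via their Apéry set with respect to the multiplicity. First I would observe that since $x \in \SG(S)$ and $x < \m(S)$, Proposition~\ref{proposition5} gives that $S \cup \{x\}$ is a numerical semigroup, and its multiplicity is $\m(S \cup \{x\}) = x$ because $x$ is smaller than every nonzero element of $S$. Since $S \cup \{x\}$ is assumed to be a $\MED$-semigroup, Proposition~\ref{proposition7} tells us that $\msg(S \cup \{x\}) = (\Ap(S \cup \{x\}, x) \setminus \{0\}) \cup \{x\}$. By Proposition~\ref{proposition6}, the Apéry set $\Ap(S \cup \{x\}, x)$ has exactly $x$ elements, one least element $w(i)$ in each residue class $i \in \{0, \dots, x-1\}$ modulo $x$, with $w(0) = 0$. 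The key point to verify is that for each $i \neq 0$, this least element $w(i)$ actually belongs to $S$ (not merely to $S \cup \{x\}$) and is in fact a minimal generator of $S$ in the class $i$.

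For part (1), I would argue that for $i \in \{1, \dots, x-1\}$, the Apéry element $w(i)$ cannot equal $x$ (since $x$ lies in class $0$ as $x \equiv 0 \pmod{x}$), so $w(i) \in S$; this already shows each nonzero class is met by an element of $S$, and a short argument shows $w(i) \in \msg(S)$. For part (2), since $\MED$-semigroups satisfy $\msg(S \cup \{x\}) = \{x, w(1), \dots, w(x-1)\}$, it suffices to identify each $w(i)$ with $\alpha(i) = \min\{a \in \msg(S) \mid a \equiv i \pmod{x}\}$. The inclusion $w(i) \geq \alpha(i)$ is not immediate and the reverse requires care: I would show that $w(i)$, being the least element of $S \cup \{x\}$ in class $i$ and lying in $S$, must coincide with the least \emph{minimal generator} of $S$ in that class, using that any element of $S$ in class $i$ that is not a minimal generator decomposes as a sum involving a smaller element of $S \cup \{x\}$ in the same class, contradicting minimality of $w(i)$.

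The main obstacle I anticipate is the careful bookkeeping in matching $w(i)$ with $\alpha(i)$: one must rule out that the minimal element of $S \cup \{x\}$ in a class is obtained by adding $x$ to a smaller $S$-element, and conversely that $\alpha(i)$ is genuinely the Apéry witness rather than some larger minimal generator. Precisely, I would verify that $w(i) - x \notin S \cup \{x\}$ forces $w(i)$ to be minimal in $S$ relative to the order $\leq_{S \cup \{x\}}$, and then translate this into a statement purely about $\msg(S)$ and residues modulo $x$. Once the identification $w(i) = \alpha(i)$ is secured for every $i$, both conclusions follow by substituting into the $\MED$ characterization from Proposition~\ref{proposition7}.
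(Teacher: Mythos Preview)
The paper does not supply a proof of Proposition~\ref{proposition21}; it is quoted as Proposition~4.6 of \cite{coarf} and left unproved here. So there is no in-paper argument to compare against, and your outline must be judged on its own.

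Your strategy---reduce everything to the Ap\'ery-set description of $\MED$-semigroups via Proposition~\ref{proposition7}---is the natural one and works. Since $\m(S\cup\{x\})=x$ and $S\cup\{x\}$ is $\MED$, Proposition~\ref{proposition7} gives $\msg(S\cup\{x\})=\{x,w(1),\dots,w(x-1)\}$ with $w(i)\in\Ap(S\cup\{x\},x)$. For $i\neq 0$ one has $w(i)\neq x$, hence $w(i)\in S$; and because $w(i)\in\msg(S\cup\{x\})$, any decomposition $w(i)=a+b$ with $a,b\in S\setminus\{0\}\subseteq(S\cup\{x\})\setminus\{0\}$ is impossible. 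Thus $w(i)\in\msg(S)$, proving part~(1). For part~(2), $w(i)\in\{a\in\msg(S):a\equiv i\pmod x\}$ gives $\alpha(i)\le w(i)$, while $\alpha(i)\in S\subseteq S\cup\{x\}$ with $\alpha(i)\equiv i\pmod x$ gives $w(i)\le\alpha(i)$ by the definition of $w(i)$; hence $w(i)=\alpha(i)$.

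One correction to your sketch: your stated reason that $w(i)\in\msg(S)$---``decomposes as a sum involving a smaller element of $S\cup\{x\}$ \emph{in the same class}''---does not work as written, since the summands in a decomposition $w(i)=a+b$ need not lie in the residue class~$i$ modulo~$x$. The clean argument is the one above: $w(i)$ is already a minimal generator of the larger semigroup $S\cup\{x\}$, so it cannot be a sum of two nonzero elements of the smaller semigroup~$S$. Similarly, once $w(i)\in\msg(S)$ is secured, both inequalities $\alpha(i)\le w(i)$ and $w(i)\le\alpha(i)$ are one-line observations; there is no hidden difficulty of the kind you anticipate.
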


\begin{remark}\label{remark22}
		Note that as a consequence of Propositions \ref{proposition2}, \ref{proposition18} and \ref{proposition21}, if $S\in \Sat(F)$ and we know the set $\msg(S),$ then  we can easily compute $\msg(T)$ for every child $T$ of $S$ in the tree $\G(F).$
\end{remark}

\begin{algorithm}\label{algorithm23}\mbox{}\par
\end{algorithm}
\noindent\textsc{Input}: A positive integer $F.$   \par
\noindent\textsc{Output}: $\Sat(F).$

\begin{enumerate}
	\item[(1)] $\Delta=\langle F+1,\dots, 2F+1 \rangle,$ $\Sat(F)=\{\Delta\}$ and  $B=\{\Delta\}.$ 
	\item[(2)] For every $S \in B,$  compute $\theta(S)=\{x\in \SG(S)\mid x<\m(S), x\neq F \mbox{ and } S\cup \{x\} \mbox { is a saturated numerical semigroup}\}$ (by using Proposition \ref{proposition7}, Remark \ref{remark11} and Proposition \ref{proposition19}).
	\item[(3)] If $\displaystyle\bigcup_{S\in B}\theta(S)=\emptyset,$ then return $\Sat(F).$
	\item[(4)]  $C=\displaystyle\bigcup_{S\in B}\{S\cup \{x\}\mid x\in \theta(S)\}.$ 	
	\item[(5)] For all $S\in C$ compute $\msg(S),$ by using Proposition \ref{proposition21}.
	\item[(6)]  $\Sat(F)= \Sat(F)\cup C,$ $B=C,$ and   go to Step $(2).$ 	
\end{enumerate}
Next we illustrate this algorithm with an example.

\begin{example}\label{example33}
	We are going to calculate  $\Sat(7)$, by using Algorithm \ref{algorithm23}.
	\begin{itemize}
		\item $\Delta=\langle 8,9,10,11,12,13,14,15\rangle,$ $\Sat(7)=\{\Delta\}$ and $B=\{\Delta\}.$
		\item By Proposition \ref{proposition7}, we know that $\Ap(\Delta,8)=\{0,9,10,11,12,13,14,15\}.$ By using Remark \ref{remark11}, we have that $\SG(\Delta)=\{4,5,6,7\}$ and by using Proposition \ref{proposition19}, $\theta(\Delta)=\{4,5,6\}.$
		\item $C=\{\Delta\cup \{4\}, \Delta\cup \{5\}, \Delta\cup \{6\} \}$ and by applying Proposition \ref{proposition21}, we have that $\msg(\Delta\cup \{4\})=\{4,9,10,11\}, $ $\msg(\Delta\cup \{5\})=\{5,8,9,11,12\} $ and $\msg(\Delta\cup \{6\})=\{6,8,9,10,11,13\}. $		
		\item $\Sat(7)=\{\Delta,\Delta\cup \{4\}, \Delta\cup \{5\}, \Delta\cup \{6\}\}$ and $B=\{\Delta\cup \{4\}, \Delta\cup \{5\}, \Delta\cup \{6\}\}.$
		\item $\Ap(\Delta\cup \{4\},4)=\{0,9,10,11\},$ $\Ap(\Delta\cup \{5\},5)=\{0,8,9,11,12\}$ and $\Ap(\Delta\cup \{6\},6)=\{0,8,9,10,11,13\}.$ Then $\SG(\Delta \cup \{4\})=\{5,6,7\}, $ $\SG(\Delta \cup \{5\})=\{4,6,7\} $ and  $\SG(\Delta \cup \{6\})=\{3,4,5,7\}. $ Therefore, 
		$\theta(\Delta \cup \{4\})=\emptyset=\theta(\Delta \cup \{5\})$ and  $\theta(\Delta \cup \{6\})=\{3,4\}. $
		\item $C=\{\Delta\cup \{3,6\}, \Delta\cup \{4,6\} \},$ $\msg(\Delta\cup \{3,6\})=\{3,8,10\} $ and  $\msg(\Delta\cup \{4,6\})=\{4,6,9,11\} .$ 
		
		\item $\Sat(7)=\{\Delta,\Delta\cup \{4\}, \Delta\cup \{5\}, \Delta\cup \{6\}, \Delta\cup \{3,6\}, \Delta\cup \{4,6\}\}$ and $B=\{\Delta\cup \{3,6\}, \Delta\cup \{4,6\}\}.$
		
		\item $\Ap(\Delta\cup \{3,6\},3)=\{0,8,10\}$ and  $\Ap(\Delta\cup \{4,6\},4)=\{0,6,9,11\}.$ Then $\SG(\Delta \cup \{3,6\})=\{5,7\}$ and   $\SG(\Delta \cup \{4,6\})=\{2,5,7\}.$  Therefore, 
		$\theta(\Delta \cup \{3,6\})=\emptyset$ and $\theta(\Delta \cup \{4,6\})=\{2\}.$	
		\item $C=\{\Delta\cup \{2,4,6\}\}$ and $\msg(\Delta\cup \{2,4,6\})=\{2,9\}. $ 
		
		\item $\Sat(7)=\{\Delta,\Delta\cup \{4\}, \Delta\cup \{5\}, \Delta\cup \{6\}, \Delta\cup \{3,6\}, \Delta\cup \{4,6\}, \Delta\cup \{2, 4,6\}\}$ and $B=\{\Delta\cup \{2,4,6\}\}.$
		
		\item $\Ap(\Delta\cup \{2,4,6\},2)=\{0,9\}.$  Then $\SG(\Delta \cup \{2,4,6\})=\{7\}$ and   
		$\theta(\Delta \cup \{2,4,6\})=\emptyset.$

		\item The algorithm returns $$\Sat(7)=\{\Delta,\Delta\cup \{4\}, \Delta\cup \{5\}, \Delta\cup \{6\}, \Delta\cup \{3,6\}, \Delta\cup \{4,6\}, \Delta\cup \{2, 4,6\} \}.$$
	\end{itemize}
\end{example}
\section{The elements of $\Sat(F)$ with fixed genus}
Given $F$ and $g$  positve integers, denote by 
$$
\Sat(F,g)=\{S\in \Sat(F)\mid \g(S)=g\}.
$$
From Proposition \ref{proposition13} it is deduced the following result.

\begin{lemma}\label{lemma25}
With the previous notation, if $\Sat(F,g)\neq \emptyset,$	then $\frac{F+1}{2}\leq g\leq F.$	
\end{lemma}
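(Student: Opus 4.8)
The plan is to prove the two inequalities $\frac{F+1}{2}\leq g$ and $g\leq F$ separately, assuming $\Sat(F,g)\neq\emptyset$, i.e.\ that there exists some $S\in\Sat(F)$ with $\g(S)=g$ and $\F(S)=F$.

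For the lower bound, I would simply invoke Proposition \ref{proposition13}, which states that for any numerical semigroup $S$ one has $\frac{\F(S)+1}{2}\leq \g(S)$. Since our $S$ satisfies $\F(S)=F$ and $\g(S)=g$, this immediately yields $\frac{F+1}{2}\leq g$. This half requires no real work beyond citing the stated result.

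For the upper bound $g\leq F$, the key observation is that every gap of $S$ is a nonnegative integer that is at most the Frobenius number $F$, and that $0\notin$ counts toward neither bound in a problematic way. More precisely, the set of gaps $\N\backslash S$ is a subset of $\{1,2,\dots,F\}$, because by definition $F=\F(S)$ is the largest integer not in $S$, so no gap exceeds $F$, and $0\in S$ means $0$ is not a gap. Hence $g=\g(S)=\mathrm{cardinality}(\N\backslash S)\leq \mathrm{cardinality}(\{1,\dots,F\})=F$, giving $g\leq F$.

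I expect neither step to present a genuine obstacle, since this lemma is essentially a packaging of elementary facts: the lower bound is exactly Proposition \ref{proposition13} applied to an element of $\Sat(F,g)$, and the upper bound follows from the definition of the Frobenius number as the largest gap. The only point requiring a little care is to make explicit that the nonemptiness hypothesis $\Sat(F,g)\neq\emptyset$ is what lets us fix a concrete semigroup $S$ on which to read off $\F(S)=F$ and $\g(S)=g$; without an actual element the invariants $F$ and $g$ would not be linked. Thus the whole proof amounts to choosing such an $S$ and applying the two observations above.
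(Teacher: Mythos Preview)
Your proof is correct and follows the same approach as the paper, which simply deduces the lemma from Proposition~\ref{proposition13}; you have merely spelled out explicitly the trivial upper bound $g\leq F$ coming from the fact that all gaps lie in $\{1,\dots,F\}$.
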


Let $S$ be a numerical semigroup, then the {\it   associated sequence} to $S$ is recurrently defined   as follows:
\begin{itemize}
\item $S_0=S,$
\item $S_{n+1}=S_n\backslash \{\m(S_n)\}$ for all $n\in \N.$	
	\end{itemize}

Let $S$ be a numerical semigroup. We say that an element $s$ of $S$ is an {\it small elemnent} of $S$ if $s<\F(S).$ Denote by $\NN(S)$ the set of small elements of $S.$ The cardinality of $\NN(S)$ will be denoted by $\n(S).$

It is clear that  the disjoint union of the sets $\NN(S)$ and $\N\backslash S$ is the set $\{0,\dots, \F(S)\}.$ Therefore, we have the following result.
\begin{lemma}\label{lemma26}
	If $S$ is a numerical semmigroup, then $\g(S)+\n(S)=\F(S)+1.$	
\end{lemma}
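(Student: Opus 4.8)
The final statement to prove is Lemma~\ref{lemma26}: if $S$ is a numerical semigroup, then $\g(S)+\n(S)=\F(S)+1$.

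The plan is to exploit the partition of the integer interval $\{0,1,\dots,\F(S)\}$ that the excerpt has already flagged in the sentence immediately preceding the lemma. The key observation is that every integer in $\{0,\dots,\F(S)\}$ either lies in $S$ or does not, and these two possibilities are mutually exclusive and exhaustive. First I would note that the gaps of $S$, namely $\N\backslash S$, all lie below $\F(S)$ by definition of the Frobenius number (the largest integer not in $S$), so $\N\backslash S\subseteq\{0,\dots,\F(S)\}$; since $\F(S)\notin S$ we in fact have $\N\backslash S\subseteq\{0,\dots,\F(S)-1\}$ with $\F(S)$ itself a gap. Thus the set of gaps is exactly $\N\backslash S$, of cardinality $\g(S)$. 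Second, the small elements $\NN(S)=\{s\in S\mid s<\F(S)\}$ are precisely the elements of $S$ lying in $\{0,\dots,\F(S)\}$ (note $\F(S)\notin S$, so the strict inequality loses nothing), and this set has cardinality $\n(S)$.

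Next I would assemble these into a disjoint union: the set $\{0,1,\dots,\F(S)\}$ is the disjoint union of $\NN(S)$ (its elements that belong to $S$) and $\N\backslash S$ (its elements that do not belong to $S$). Because these two subsets are disjoint and their union is all of $\{0,\dots,\F(S)\}$, counting cardinalities gives
\[
\n(S)+\g(S)=\bigl|\{0,1,\dots,\F(S)\}\bigr|=\F(S)+1,
\]
which is exactly the claimed identity after rearrangement.

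I expect no genuine obstacle here; the lemma is essentially a counting tautology once the partition is correctly identified, which is why the excerpt presents it as immediate. The only point requiring a word of care is the boundary behaviour at $\F(S)$: one must confirm that $\F(S)$ is counted as a gap and not as a small element, and that no gap exceeds $\F(S)$. Both follow directly from the definition of the Frobenius number as the largest integer not in $S$, so the strict inequality $s<\F(S)$ in the definition of $\NN(S)$ together with $\F(S)\notin S$ makes the two sets fit together exactly with no overlap and no omission.
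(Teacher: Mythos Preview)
Your argument is correct and is exactly the approach the paper takes: the sentence preceding the lemma already records that $\{0,\dots,\F(S)\}$ is the disjoint union of $\NN(S)$ and $\N\backslash S$, and the paper presents the lemma as an immediate consequence without further proof. Your write-up simply spells out that counting argument, including the boundary check at $\F(S)$.
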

If $S$ is a numerical semigroup and  $\{S_n\}_{n\in \N}$ is its associated sequence, then the set  $\Cad(S)=\{S_0,S_1,\dots, S_{\n(S)-1}\}$ is called the {\it associated chain} to $S.$ Observe that 
$S_0=S$ and $S_{\n(S)-1}=\Delta(\F(S)+1).$

Observe that, from Proposition \ref{proposition16}, we know that if $S\in \Sat(F),$ then $\Cad(S)\subseteq \Sat(F).$ Therefore, we can enounce the following result.

\begin{lemma}\label{lemma27} If $S\in \Sat(F),$ then $\Sat(F,g)\neq \emptyset$ for all $g\in \{\g(S),\cdots,F\}.$
\end{lemma}
Our next aim is to determine the minimum element of the set $\{\g(S)\mid S\in \Sat(F)\}.$ For this purpose we introduce the following notation. Let $\{a,b\}\subseteq \N,$ then we denote by
$$
\T(a,b)=\langle a \rangle\cup \{x\in \N\mid x\ge b\}.
$$

For integers $a$ and $b,$ we say that $a$ {\it divides} $b$ if there exists an integer $c$ such that $b=ca,$ and we denote this by $a\mid b.$ Otherwise, $a$ {\it does not divide} $b$, and we denote this by $a\nmid b.$

The nex lemma is \cite[Lemma 2.3]{houston2} and it shows a characterization  of saturated numerical semigroups.

\begin{lemma}\label{lemma28}
Let $S$ be a numerical semigroup. Then 	$S$ is a saturated numerical semigroup if and only if there are positive integers $a_1,b_1,\cdots,a_n,b_n$ verifying the following properties:
\begin{enumerate}
	\item[1)] $a_{i+1}\mid a_i$ for all $i\in \{1,\cdots, n-1\}.$
	\item[1)] $a_i<b_i<b_{i+1}$ for all $i\in \{1,\cdots, n-1\}.$
	\item[1)] $S=\T(a_1,b_1)\cap \cdots \cap \T(a_n,b_n) .$	
\end{enumerate}
\end{lemma}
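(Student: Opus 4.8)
The plan is to prove the two implications separately, treating the ``if'' direction as the routine one and concentrating the real work on the ``only if'' direction.

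\textbf{Sufficiency.} First I would record that for positive integers $a<b$ the set $\T(a,b)=\langle a\rangle\cup\{x\in\N\mid x\ge b\}$ is itself a saturated numerical semigroup. It is a numerical semigroup because it contains $0$, is closed under addition (a sum of two multiples of $a$ is again a multiple of $a$, while any sum involving an element $\ge b$ is itself $\ge b$), has finite complement contained in $[0,b)$, and contains $a$ together with the consecutive integers $b,b+1$, so that the gcd of its elements is $1$. To see it is saturated, take $s\in\T(a,b)\setminus\{0\}$ and compute $\d_{\T(a,b)}(s)$: if $s<b$ then $s$ is a positive multiple of $a$ and the elements of $\T(a,b)$ not exceeding $s$ are exactly the multiples of $a$ up to $s$, so $\d_{\T(a,b)}(s)=a$ and $s+a$ is again a multiple of $a$, hence in $\T(a,b)$; if $s\ge b$ then $\d_{\T(a,b)}(s)$ divides $\gcd(b,b+1)=1$ when $s>b$ (so $s+1\in\T(a,b)$) and equals $\gcd(a,b)$ when $s=b$ (so $s+\gcd(a,b)>b$ lies in $\T(a,b)$). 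In every case $s+\d_{\T(a,b)}(s)\in\T(a,b)$. Once this is established, the intersection $\T(a_1,b_1)\cap\cdots\cap\T(a_n,b_n)$ is a numerical semigroup by Proposition \ref{proposition12} and is saturated by repeated application of Lemma \ref{lemma14}; note that here the divisibility and ordering hypotheses are not even needed.

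\textbf{Necessity.} Now suppose $S$ is saturated and list its elements up to the Frobenius number as $0=s_0<s_1<\cdots<s_r=\F(S)+1$. I would track the divisor sequence $d_k=\d_S(s_k)=\gcd(s_1,\dots,s_k)$. Since $d_{k+1}=\gcd(d_k,s_{k+1})$, this sequence is non-increasing and each term divides its predecessor; let $a_1>a_2>\cdots>a_n$ be its distinct values, so that $a_1=\m(S)$ and $a_{i+1}\mid a_i$, which is condition~1). Next I would set $b_i=\min\{s\in S\mid \d_S(s)=a_{i+1}\}$ for $i<n$ and $b_n=\F(S)+1$, that is, the successive thresholds at which the divisor drops. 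Because a strictly smaller divisor value is first attained only at a strictly larger element, the $b_i$ increase; and each $b_i$ is a positive element of $S$ whose divisor is smaller than $a_i$, forcing $b_i>\m(S)\ge a_i$ for $i\ge 2$ and $b_1>\m(S)=a_1$ directly, so $a_i<b_i<b_{i+1}$, giving condition~2).

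The heart of the argument, and the step I expect to be the main obstacle, is the identification $S=\T(a_1,b_1)\cap\cdots\cap\T(a_n,b_n)$, equivalently that on each window $[b_{i-1},b_i)$ (with $b_0=0$) the elements of $S$ are \emph{exactly} the multiples of $a_i$, while $[b_n,\infty)=[\F(S)+1,\infty)\subseteq S$ holds by definition of the Frobenius number. That the divisor equals $a_i$ throughout $[b_{i-1},b_i)$ follows because $a_i$ and $a_{i+1}$ are consecutive attained values, so on this window the non-increasing sequence $d_k$ can only take the value $a_i$; in particular every element of $S$ there is a multiple of $a_i$, and conversely no number $s$ with $a_i\nmid s$ can occur, since such an $s$ would have $\d_S(s)<a_i$ and hence lie beyond $b_i$. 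The reverse inclusion is where saturation does the work: starting from the element $b_{i-1}$, which has divisor $a_i$, repeatedly adding $\d_S=a_i$ produces $b_{i-1}+a_i,b_{i-1}+2a_i,\dots$, and since each new term is a multiple of $a_i$ the divisor stays equal to $a_i$, so every multiple of $a_i$ in $[b_{i-1},b_i)$ is forced into $S$ and cannot be a gap. The delicate points I would have to handle carefully are purely bookkeeping: verifying that consecutive windows splice together cleanly at the threshold values, and treating the top of the tower, where $\d_S(\F(S)+1)=a_n$, so that if $a_n$ is first attained exactly at $\F(S)+1$ one must collapse the last two thresholds to keep the inequalities in condition~2) strict.
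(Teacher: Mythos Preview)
The paper does not actually prove this lemma: it is quoted verbatim from \cite[Lemma~2.3]{houston2} and no argument is supplied. So there is no in-paper proof to compare your proposal against; you are providing a self-contained proof where the authors simply cite the literature.

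Your argument is sound in substance. For sufficiency, checking that each $\T(a,b)$ with $a<b$ is saturated and then invoking Lemma~\ref{lemma14} is the natural route and works as you describe. For necessity, tracking the strictly decreasing sequence of values of $\d_S$ and recording the thresholds where it drops is exactly the right structure; the forward inclusion (every element of $S$ in the window $[b_{i-1},b_i)$ is a multiple of $a_i$) follows from the definition of the windows, and saturation then forces every multiple of $a_i$ in that window back into $S$, giving the reverse inclusion.

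One genuine, if minor, bookkeeping gap: when $S=\Delta(m)=\{0,m,\rightarrow\}$ your construction yields $r=1$, $a_1=\m(S)=m$ and $b_1=\F(S)+1=m$, so $a_1=b_1$ and condition~2) fails. Your ``collapse the last two thresholds'' fix does not apply here since $n=1$. The repair is trivial (take $b_1=\F(S)+2$, or just observe $\Delta(m)=\T(m,m+1)$ directly), but since you already flag the top of the tower as delicate, make sure this base case is actually covered; the same remark applies to $S=\N$.
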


The next Lemma is an immediate consequence of Lemma \ref{lemma28}. 
\begin{lemma}\label{lemma29}
	If $S$ is a maximal element of $\Sat(F),$ then $S=\T(a,F+1)$ for some $a\in \{1,\cdots, F\}$ such that $a\nmid F.$	
\end{lemma}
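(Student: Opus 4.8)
The plan is to deduce the statement directly from the structural characterization in Lemma \ref{lemma28}, using maximality to force $n=1$. First I would take $S$ a maximal element of $\Sat(F)$ and apply Lemma \ref{lemma28} to write $S=\T(a_1,b_1)\cap\cdots\cap\T(a_n,b_n)$ with the divisibility and ordering conditions $a_{i+1}\mid a_i$ and $a_i<b_i<b_{i+1}$. Since $\F(S)=F$, I would first pin down how $F$ is recovered from this presentation: by Proposition \ref{proposition12}(1) the Frobenius number of an intersection is the maximum of the Frobenius numbers, and $\F(\T(a,b))=b-1$ whenever $a\nmid(b-1)$ (more precisely $b-1$ is the Frobenius number of $\T(a,b)$ as long as $a$ does not already put $b-1$ in $\langle a\rangle$). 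Because the $b_i$ are strictly increasing, the largest gap comes from the last block, giving $F=b_n-1$, i.e. $b_n=F+1$.

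The heart of the argument is to show that maximality forces $n=1$. The idea is that $S=\T(a_1,b_1)\cap\cdots\cap\T(a_n,b_n)$ is contained in $\T(a_1,F+1)$ (take the coarsest block: $\langle a_1\rangle\cup\{x\ge F+1\}$ contains $S$ because every element of $S$ is either $\ge F+1$ or lies in all the $\langle a_i\rangle$, hence in $\langle a_1\rangle$ since $a_i\mid\cdots$ — here I would check the divisibility direction carefully, as $a_{i+1}\mid a_i$ means $a_n$ is the smallest and $a_1$ a multiple of each, so $\langle a_1\rangle\subseteq\langle a_i\rangle$). I would verify that $\T(a_1,F+1)$ is itself saturated (it is $\T(a_1,F+1)$, a single block, covered by Lemma \ref{lemma28} with $n=1$) and has Frobenius number $F$, provided $a_1\nmid F$. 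Then $S\subseteq\T(a_1,F+1)\in\Sat(F)$, and maximality of $S$ forces $S=\T(a_1,F+1)$, so we may take $a=a_1$ and $n=1$ from the outset.

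The remaining point is the condition $a\nmid F$ with $a\in\{1,\dots,F\}$. I would argue that $\F(\T(a,F+1))=F$ requires $F\notin\T(a,F+1)$, and since every integer $\ge F+1$ lies in the block, $F\notin\T(a,F+1)$ is equivalent to $F\notin\langle a\rangle$, i.e. $a\nmid F$. The range $a\le F$ follows because $a\ge F+1$ would make $\T(a,F+1)=\{0,F+1,\to\}=\Delta(F+1)$, collapsing the generator $a$ into the tail; and $a=F+1$ gives the root, whose associated $a$ can be normalized into the stated range (or handled by noting $a\nmid F$ already excludes $a=F$ only when $F\mid F$, which always holds, so $a\ne F$ is automatic, and $a<F+1$).

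The step I expect to be the main obstacle is establishing the containment $S\subseteq\T(a_1,F+1)$ rigorously and confirming that the coarsest single block recaptures the Frobenius number $F$ — in particular checking the divisibility chain forces $\langle a_1\rangle\subseteq\langle a_i\rangle$ in the right direction, and that dropping all blocks but the first genuinely enlarges (or preserves) the semigroup without changing $F$. Everything else is bookkeeping on top of Lemmas \ref{lemma28} and the Frobenius-of-intersection formula in Proposition \ref{proposition12}.
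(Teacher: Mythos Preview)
Your overall strategy---apply Lemma~\ref{lemma28}, find a single factor $\T(a,F+1)$ in $\Sat(F)$ containing $S$, and invoke maximality---is exactly what the paper intends. But the specific containment you aim for, $S\subseteq\T(a_1,F+1)$, is false in general, and your own parenthetical remark reveals why: since $a_{i+1}\mid a_i$, we have $\langle a_1\rangle\subseteq\langle a_i\rangle$, which is the wrong direction for your argument. The claim ``every element of $S$ that is $<F+1$ lies in all the $\langle a_i\rangle$'' breaks down as soon as $n\ge 2$: an element $s\in S$ with $b_1\le s<F+1$ belongs to $\T(a_1,b_1)$ only because $s\ge b_1$, not because $a_1\mid s$. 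Concretely, $S=\T(4,8)\cap\T(2,22)=\{0,4,8,10,12,\dots,20,22,\rightarrow\}\in\Sat(21)$ contains $10$, but $10\notin\T(4,22)$.

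The fix is to use the \emph{last} factor instead of the first. From $S=\bigcap_i\T(a_i,b_i)$ we have $S\subseteq\T(a_n,b_n)$ trivially. Since $\F(S)=F$, Proposition~\ref{proposition12}(1) gives $F=\max_i\F(\T(a_i,b_i))$; but for $i<n$ one has $b_i<b_n$ and hence $\F(\T(a_i,b_i))<b_i\le b_n-1$, so the maximum is attained at $i=n$ and $\F(\T(a_n,b_n))=F$. This forces $b_n\ge F+1$ and $a_n\nmid F$, and in fact $\T(a_n,b_n)=\T(a_n,F+1)$ (every integer in $[F+1,b_n-1]$ lies in $S\subseteq\langle a_n\rangle\cup\{b_n,\rightarrow\}$, hence in $\langle a_n\rangle$). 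Now $\T(a_n,F+1)\in\Sat(F)$ and $S\subseteq\T(a_n,F+1)$, so maximality yields $S=\T(a_n,F+1)$ with $a=a_n\in\{1,\dots,F\}$ and $a\nmid F$. Once you swap $a_1$ for $a_n$, the rest of your outline goes through.
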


If $n$ is a positive integer, then we denote $\A(n)=\{x\in \{1,\cdots, n\}\mid x\nmid n\}$ and $\B(n)=\{x\in \A(n)\mid x'\nmid x \mbox{ for all }x'\in \A(n)\backslash \{x\}\}.$\\

The following result is a consequence of Lemmas \ref{lemma28} and \ref{lemma29}.

\begin{theorem}\label{theorem30}
	With the previous notation, $S$ is a maximal element of $\Sat(F)$ if and only if $S=\T(x,F+1)$ for some $x\in \B(F).$	
\end{theorem}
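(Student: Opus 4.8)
The plan is to combine the structural description of maximal elements from Lemma~\ref{lemma29} with a careful analysis of when two such elements are comparable under inclusion. By Lemma~\ref{lemma29}, every maximal element of $\Sat(F)$ has the form $\T(a,F+1)$ with $a\in\A(F)$ (the condition $a\in\{1,\dots,F\}$ and $a\nmid F$ is exactly membership in $\A(F)$). Conversely, one checks easily that for any $a\in\A(F)$ the set $\T(a,F+1)=\langle a\rangle\cup\{x\in\N\mid x\ge F+1\}$ is a numerical semigroup with Frobenius number $F$: indeed $F\notin\langle a\rangle$ precisely because $a\nmid F$, and $F+1,\dots$ all belong to it, so $\F(\T(a,F+1))=F$. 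It is saturated by Lemma~\ref{lemma28} (taking $n=1$, $a_1=a$, $b_1=F+1$). So the family $\{\T(a,F+1)\mid a\in\A(F)\}$ is precisely the set of candidates among which the maximal elements sit, and the whole content of the theorem is identifying which of these candidates are genuinely maximal, namely those indexed by $\B(F)$.

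First I would establish the key comparison lemma: for $a,a'\in\A(F)$, we have $\T(a,F+1)\subseteq\T(a',F+1)$ if and only if $a'\mid a$. The forward direction is immediate, since $a\in\T(a,F+1)$ forces $a\in\T(a',F+1)$, and as $a<F+1$ the only way $a$ lies in $\T(a',F+1)$ is $a\in\langle a'\rangle$, i.e. $a'\mid a$. For the converse, if $a'\mid a$ then $\langle a\rangle\subseteq\langle a'\rangle$ and the two sets share the same tail $\{x\ge F+1\}$, giving the inclusion. With this equivalence in hand, the maximal elements of $\Sat(F)$ correspond exactly to the elements $\T(a,F+1)$ that are not strictly contained in any other $\T(a',F+1)$, which by the lemma means there is no $a'\in\A(F)\setminus\{a\}$ with $a'\mid a$. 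That condition is precisely the definition of $a\in\B(F)$.

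The remaining point to verify is that maximality of $\T(x,F+1)$ \emph{within the full covariety} $\Sat(F)$ is the same as maximality \emph{within the subfamily} $\{\T(a,F+1)\mid a\in\A(F)\}$. This is where Lemma~\ref{lemma29} does the essential work: it guarantees that every maximal element of $\Sat(F)$ already has the form $\T(a,F+1)$, so any saturated semigroup strictly containing $\T(x,F+1)$ can be enlarged (if necessary) to a maximal element, which must itself be of the form $\T(a',F+1)$ with $a'\in\A(F)$. Hence $\T(x,F+1)$ fails to be maximal in $\Sat(F)$ if and only if it is strictly contained in some $\T(a',F+1)$ with $a'\in\A(F)$, reducing the question to the comparison lemma above.

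I would then assemble the proof as a short chain of equivalences: $\T(x,F+1)$ is maximal in $\Sat(F)$ $\iff$ no $\T(a',F+1)$ with $a'\in\A(F)$ properly contains it $\iff$ there is no $a'\in\A(F)\setminus\{x\}$ with $a'\mid x$ $\iff$ $x\in\B(F)$. The step I expect to be the main obstacle is the justification that one may always pass from an arbitrary strictly larger saturated semigroup to a strictly larger one of the special form $\T(a',F+1)$; this relies on knowing that the poset $\Sat(F)$ has its maximal elements described by Lemma~\ref{lemma29}, and one must argue that enlarging $\T(x,F+1)$ keeps the Frobenius number equal to $F$ (which holds because any $S\in\Sat(F)$ containing $\T(x,F+1)$ still has $\F(S)=F$ by definition of $\Sat(F)$) so that the maximal element reached is indeed in $\Sat(F)$. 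Everything else is routine divisibility bookkeeping.
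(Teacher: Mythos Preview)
Your proposal is correct and follows essentially the same route as the paper, which simply states that the theorem is a consequence of Lemmas~\ref{lemma28} and~\ref{lemma29} without writing out the details. You have supplied precisely those details: using Lemma~\ref{lemma29} to reduce to candidates $\T(a,F+1)$ with $a\in\A(F)$, establishing the divisibility criterion for inclusion among such sets, and invoking finiteness of $\Sat(F)$ to pass from an arbitrary strictly larger element to a maximal one of the required form.
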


In the following Example, we illustrate how the previous theorem works. 
\begin{example}\label{example31}
We are going to apply the Theorem \ref{theorem30} to compute the maximal elements of $\Sat(30).$ 	As $$\A(30)=\{4,7,8,9,11,12,13,14,16,17,18,19,20,21,22,23,24,25,26,27,28,29\},$$ then $\B(30)=\{4,7,9,11,13,17,19,23,25,29\}.$ Therefore, by applying Theorem \ref{theorem30}, we have that the set formed by the maximals elements of $\Sat(30)$ is $\{ \T(4,31),\, \T(7,31),\, \T(9,31),\, \T(11,31),\, \T(13,31),\, \T(17,31),\, \T(19,31),\, \T(23,31),\\ \T(25,31),\,\T(29,31)\}.$
\end{example}

Let $q\in \Q.$ Denote $\lfloor q \rfloor=\max\{z\in \Z\mid z\leq q\}.$ 
The following result is a consequence of Theorem \ref{theorem30}.
\begin{corollary}\label{corollary32}
	If $p$ is the least positive integer such that $p\nmid F$,
 then $\min\{\g(S)\mid S\in \Sat(F)\}=F-\displaystyle \left\lfloor \frac{F}{p}\right\rfloor.$
	\end{corollary}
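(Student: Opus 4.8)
The plan is to combine Theorem~\ref{theorem30} with Lemma~\ref{lemma26} to reduce the computation of $\min\{\g(S)\mid S\in \Sat(F)\}$ to a maximization of the number of small elements over the maximal elements of $\Sat(F)$. Since genus and the number of small elements satisfy $\g(S)+\n(S)=F+1$, minimizing the genus is the same as maximizing $\n(S)$, and it suffices to restrict attention to maximal elements of $\Sat(F)$: every $S\in\Sat(F)$ is contained in some maximal element $T$, and $S\subseteq T$ forces $\n(S)\le\n(T)$, hence $\g(S)\ge\g(T)$. So $\min\{\g(S)\mid S\in\Sat(F)\}=\min\{\g(T)\mid T \text{ maximal in } \Sat(F)\}$.

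Next I would compute $\n(T)$ explicitly for a maximal element $T=\T(x,F+1)$ with $x\in\B(F)$. By definition $\T(x,F+1)=\langle x\rangle\cup\{y\in\N\mid y\ge F+1\}$, so the small elements of $T$ (those strictly below $F$) are exactly the multiples of $x$ lying in $\{0,1,\dots,F-1\}$, namely $0,x,2x,\dots,\lfloor \frac{F}{p}\rfloor$-type counts. Concretely, the number of multiples of $x$ in $\{0,\dots,F\}$ that are genuine small elements is $\lfloor F/x\rfloor+1$ once one accounts for $0$; I would verify that $\n(\T(x,F+1))=\lfloor F/x\rfloor+1$, using that $x\nmid F$ guarantees $F$ itself is a gap and that the Frobenius number really is $F$. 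Then $\g(\T(x,F+1))=F+1-\n(\T(x,F+1))=F-\lfloor F/x\rfloor$.

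With this formula in hand, minimizing the genus over maximal elements becomes minimizing $F-\lfloor F/x\rfloor$ over $x\in\B(F)$, i.e.\ maximizing $\lfloor F/x\rfloor$. Since $\lfloor F/x\rfloor$ is a decreasing function of $x$, the maximum is attained at the \emph{smallest} admissible $x$. The key point is that the smallest element of $\A(F)=\{x\in\{1,\dots,F\}\mid x\nmid F\}$ is automatically a minimal element for divisibility, hence lies in $\B(F)$: if $p$ is the least positive integer with $p\nmid F$, then no strictly smaller $x'\in\A(F)$ exists to divide $p$, so $p\in\B(F)$. Therefore the minimizing $x$ is exactly this $p$, giving $\min\{\g(S)\mid S\in\Sat(F)\}=F-\lfloor F/p\rfloor$.

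The main obstacle I anticipate is the bookkeeping in the third paragraph: one must confirm that the smallest non-divisor $p$ of $F$ indeed belongs to $\B(F)$ and that it genuinely minimizes $F-\lfloor F/x\rfloor$ over all of $\B(F)$, not merely over $\A(F)$. The first is immediate since $p$ is divisibility-minimal in $\A(F)$, but the second requires noting that although $\B(F)\subsetneq\A(F)$ in general, discarding the non-minimal elements of $\A(F)$ can only remove \emph{larger} values of $x$, which yield smaller $\lfloor F/x\rfloor$ and hence larger genus; so the minimum is preserved and is attained at $p=\min\B(F)=\min\A(F)$. A minor care point is the off-by-one in counting small elements versus the value $\lfloor F/p\rfloor$, which the genus formula absorbs cleanly once $\n$ is computed correctly.
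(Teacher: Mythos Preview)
Your argument is correct and is exactly the elaboration one would give of the paper's one-line justification ``a consequence of Theorem~\ref{theorem30}'': reduce to maximal elements via containment, compute $\g(\T(x,F+1))=F-\lfloor F/x\rfloor$ using Lemma~\ref{lemma26}, and observe that the minimum over $x\in\B(F)$ is attained at $p=\min\A(F)\in\B(F)$. The only cosmetic slip is the stray ``$\lfloor F/p\rfloor$'' in the second paragraph where you mean $\lfloor F/x\rfloor$, but you correct this immediately in the next sentence.
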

By using this corollary, in the following example we calculate the minimum genus of the elements belonging  to $\Sat(7),$ as well as the minimum genus of the elementos of  $\Sat(6).$

\begin{example} \label{example33}
	We have that
	\begin{itemize}
		\item minimum$\{\g(S)\mid S\in \Sat(7)\}=7- \left\lfloor \frac{7}{2}\right\rfloor=7-3=4.$ Moreover, 
		$\g(\T(2,8))=4.$
		\item minimum$\{\g(S)\mid S\in \Sat(6)\}=6- \left\lfloor \frac{6}{4}\right\rfloor=6-1=5.$ In addition, $\g(\T(4,7))=5.$
	\end{itemize}
	\end{example}
We have now all the ingredients needed to present the announced algorithm.

\begin{algorithm}\label{algorithm34}\mbox{}\par
\end{algorithm}
\noindent\textsc{Input}: Two positive integers  $F$ and $g$ such that  $\frac{F+1}{2}\leq g\leq  F.$	  \par
\noindent\textsc{Output}: $\Sat(F,g).$

\begin{enumerate}
	\item[(1)] Compute the  smallest  positive integer $p$ such that $p\nmid F.$
	\item[(2)] If $g<F-\displaystyle \left\lfloor \frac{F}{p}\right\rfloor,$ then return $\emptyset.$
	\item[(3)] $\Delta=\langle F+1,\cdots, 2F+1\rangle,$ $H=\{\Delta\},$ $i=F.$
	\item[(4)] If $i=g,$ then return $H.$		
	\item[(5)] For all $S\in H,$ compute the set $\theta(S)=\{x\in \SG(S)\mid x<\m(S), x\neq F \mbox{ and } S\cup \{x\} \mbox{ is a saturated numerical semigroup}\}.$ 
	\item[(6)]  $H=\bigcup_{S\in H}\{S\cup \{x\}\mid x\in \theta(S)\},$ $i=i-1$ and    go to Step $(4).$ 	
\end{enumerate}
Next we illustrate this algorithm with an example.

\begin{example}\label{example35}
	By using the Algorithm \ref{algorithm34}, we are going to calculate the set $\Sat(7,5).$
	\begin{itemize}
		\item $2$ is the smallest positive integer such that it does not divide to $7$ and $7- \left\lfloor \frac{7}{2}\right\rfloor=7-3=4< 5,$ therefore we can assert that $\Sat(7,5)\neq \emptyset.$
		\item $\Delta=\langle 8,9,10,11,12,13,14,15\rangle,$ $H=\{\Delta\},$ $i=7.$
		\item $\theta(\Delta)=\{4,5,6\}.$
		\item $H=\{\Delta \cup \{4\}, \Delta \cup \{5\},\Delta \cup \{6\}\},$ $i=6.$
		\item $\theta(\Delta \cup \{4\})=\emptyset,$ $\theta(\Delta \cup \{5\})=\emptyset$ and $\theta(\Delta \cup \{6\})=\{3,4\}.$
		\item $H=\{\Delta \cup \{3,6\}, \Delta \cup \{4,6\}\},$ $i=5.$
		\item The algorithm returns $\{\Delta \cup \{3,6\}, \Delta \cup \{4,6\}\}.$
	\end{itemize}

\end{example}

\section{$\Sat(F)$-system of generators}
We will say that a set $X$ is a $\Sat(F)$-{\it set} if it verifies the following conditions:
\begin{enumerate}
	\item[1)]$X\cap \Delta(F+1)=\emptyset.$
	\item[2)]There is $S\in \Sat(F)$ such that $X\subseteq S.$	
\end{enumerate} 

If $X$ is a $\Sat(F)$-set, then the intersection of all elements of $\Sat(F)$ containing $X$ will be denoted by $\Sat(F)[X].$  As $\Sat(F)$ is a finite set, then by applying  Proposition \ref{proposition16}, we have that the intersection of elements of $\Sat(F)$ is  again an element of $\Sat(F).$ Therefore, we have the following result.
\begin{proposition}\label{proposition36}
	Let $X$ be a $\Sat(F)$-set. Then $\Sat(F)[X]$ is the smallest element of $\Sat(F)$ containing $X.$
\end{proposition}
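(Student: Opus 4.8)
The plan is to prove the statement by showing that $\Sat(F)[X]$, defined as the intersection of all elements of $\Sat(F)$ containing $X$, is itself an element of $\Sat(F)$, and that it is contained in every element of $\Sat(F)$ containing $X$; these two facts together make it the smallest such element. The whole argument rests on the covariety structure of $\Sat(F)$ established in Proposition~\ref{proposition16} together with the finiteness of $\Sat(F)$.

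First I would verify that the family over which we intersect is nonempty. By the definition of a $\Sat(F)$-set, condition~2) guarantees that there exists at least one $S\in\Sat(F)$ with $X\subseteq S$, so the collection $\{S\in\Sat(F)\mid X\subseteq S\}$ is a nonempty subfamily of $\Sat(F)$. Next I would invoke the finiteness of $\Sat(F)$: since $F$ is fixed and every element of $\Sat(F)$ has Frobenius number $F$, each such semigroup contains $\{F+1,\rightarrow\}$, so there are only finitely many possibilities, and hence the index set of the intersection is finite. Writing this subfamily as $\{S_1,\dots,S_k\}$, the object $\Sat(F)[X]=\bigcap_{i=1}^k S_i$ is a finite intersection.

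The key step is then to show this finite intersection again lies in $\Sat(F)$. Here I would apply property~2) of a covariety from the definition in the introduction: if $\{S,T\}\subseteq\CC$ then $S\cap T\in\CC$. Since $\Sat(F)$ is a covariety by Proposition~\ref{proposition16}, closure under pairwise intersection extends by an easy induction on $k$ to any finite intersection, so $\bigcap_{i=1}^k S_i\in\Sat(F)$. (One could also note directly that by Lemma~\ref{lemma14} a finite intersection of saturated numerical semigroups is saturated, and by part~1) of Proposition~\ref{proposition12} the Frobenius number of the intersection is the maximum of the individual Frobenius numbers, which is $F$; this gives membership in $\Sat(F)$ without even passing through the abstract covariety axioms.) Finally, $X\subseteq S_i$ for every $i$ forces $X\subseteq\bigcap_{i=1}^k S_i=\Sat(F)[X]$, so $\Sat(F)[X]$ is an element of $\Sat(F)$ containing $X$.

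It remains to confirm minimality, which is essentially immediate from the construction. If $T\in\Sat(F)$ is any element with $X\subseteq T$, then $T$ is one of the $S_i$ in the intersected family, whence $\Sat(F)[X]=\bigcap_{i=1}^k S_i\subseteq T$. Thus $\Sat(F)[X]$ is contained in every element of $\Sat(F)$ containing $X$ and is itself such an element, so it is the smallest one. I do not anticipate a serious obstacle here; the only point requiring care is ensuring the intersected family is both nonempty and finite so that the covariety's pairwise-intersection closure can be bootstrapped to a finite intersection, and both of these follow directly from the definition of a $\Sat(F)$-set and the fact that $\Sat(F)$ is finite.
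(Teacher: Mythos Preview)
Your proposal is correct and follows essentially the same approach as the paper: the paper's argument, given in the paragraph immediately preceding the proposition, simply notes that $\Sat(F)$ is finite and that by Proposition~\ref{proposition16} any intersection of its elements again lies in $\Sat(F)$, so the defining intersection is the smallest element containing $X$. Your write-up just makes explicit the nonemptiness of the intersected family and the minimality check, and additionally offers the direct verification via Lemma~\ref{lemma14} and Proposition~\ref{proposition12}, all of which is fine but not a genuinely different route.
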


If $X$ is a $\Sat(F)$-set and $S=\Sat(F)[X],$ we will say that $X$ is a $\Sat(F)$-{\it system of generators} of $S.$ Besides, if $S\neq\Sat(F)[Y]$ for all $Y\subsetneq X,$ then $X$ is a {\it minimal} $\Sat(F)$-{\it system of generators} of $S.$

Our next aim in this section will be  to prove that every element of   $\Sat(F)$ has a unique minimal $\Sat(F)$-system of generators. \\

The following result appears in \cite[Lemma 8]{houston1}. 

\begin{lemma}\label{lemma37}
	Let $S$ be a saturated numerical semigroup and $x\in S\backslash \{0\}.$ Then the following conditions are equivalent.
	\begin{enumerate}
		\item[1)]$S\backslash \{x\}$ is a saturated numerical semigroup.
		\item[2)]If $y\in S\backslash    \{0\}$ and $y<x,$ then $\d_S(y)\neq d_S(x).$
	\end{enumerate}
	
\end{lemma}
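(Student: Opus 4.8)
The plan is to compare the saturation condition for $S$ and for $S'=S\setminus\{x\}$ point by point, exploiting that the divisor $\d_S(s)=\gcd\{a\in S\mid a\le s\}$ is controlled by the \emph{smaller} elements of $S$. First I would dispose of the degenerate case $x=\m(S)$: then there is no $y\in S$ with $0<y<x$, so condition 2) holds vacuously, while condition 1) holds by Lemma \ref{lemma15}(3), and the equivalence is trivial. So assume $x>\m(S)$ and let $p$ be the greatest element of $S$ with $p<x$, so that $0<p$ and $x$ is the successor of $p$ in $S$. Two elementary remarks drive everything. Since $\{a\in S\mid a\le x\}=\{a\in S\mid a\le p\}\cup\{x\}$, we have $\d_S(x)=\gcd(\d_S(p),x)$; as $\d_S$ is non-increasing along $S$ and $\d_S(y)\ge \d_S(p)\ge \d_S(x)$ for every $y\in S$ with $0<y\le p$, condition 2) is equivalent to $\d_S(p)\neq \d_S(x)$, that is, to $\d_S(p)\nmid x$. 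Conversely, if $\d_S(p)\mid x$, then $\d_S(p)$ divides both $p$ and $x$, so $\d_S(p)\le x-p$; combined with $p+\d_S(p)\in S$ (saturation of $S$), with $p+\d_S(p)>p$, and with the absence of elements of $S$ strictly between $p$ and $x$, this forces $p+\d_S(p)=x$.

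For the implication 1)$\Rightarrow$2) I would argue by contraposition. Suppose 2) fails. By the first remark $\d_S(p)\mid x$, and by the second remark $p+\d_S(p)=x$. Since removing $x>p$ does not change which elements of $S$ are $\le p$, we have $\d_{S'}(p)=\d_S(p)$, so the saturation condition for $S'$ at the point $p\in S'\setminus\{0\}$ would read $p+\d_S(p)=x\in S'$, which is false because $x\notin S'$. Hence $S'$ is not saturated, contradicting 1).

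For the implication 2)$\Rightarrow$1) I would realize $S'$ as an intersection of two saturated numerical semigroups and invoke Lemma \ref{lemma14}. Put $a=\d_S(p)$. By 2) and the first remark $a\nmid x$, while $a=\gcd\{s\in S\mid s\le p\}$ divides every element of $S$ that is $\le p$. Consider $\T(a,x+1)=\langle a\rangle\cup\{y\in\N\mid y\ge x+1\}$; since $1\le a\le p<x+1$, this is a single block, hence a saturated numerical semigroup by Lemma \ref{lemma28} (with $n=1$, its conditions 1) and 2) being vacuous). I claim $S\cap \T(a,x+1)=S\setminus\{x\}$: every element of $S$ greater than $x$ is $\ge x+1$ and survives; every element of $S$ that is $\le p$ is a multiple of $a$ and survives; and $x$ is deleted, since $a\nmid x$ and $x<x+1$ keep it out of $\T(a,x+1)$. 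Thus $S'=S\cap\T(a,x+1)$, and Lemma \ref{lemma14} gives that $S'$ is a saturated numerical semigroup, as required.

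I expect the crux to be the implication 2)$\Rightarrow$1), and specifically the recognition that deleting the single point $x$ from a saturated semigroup is the same as intersecting with the auxiliary saturated semigroup $\T(\d_S(p),x+1)$: once that set identity is in hand, saturation follows at once from Lemma \ref{lemma14}, and one entirely avoids any delicate analysis of $\d_{S'}(s)$ for $s>x$ (where $\d_{S'}$ may differ from $\d_S$). The only steps needing care are the monotonicity bookkeeping for $\d_S$ near the predecessor $p$ (the first remark) and the verification that $\T(\d_S(p),x+1)$ excludes exactly $x$ and nothing else of $S$.
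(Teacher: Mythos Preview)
Your proof is correct. Note, however, that the paper does not actually prove this lemma: it simply cites \cite[Lemma 8]{houston1}, so there is no in-paper argument to compare against.

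That said, your route for $2)\Rightarrow 1)$ is elegant. By writing $S\setminus\{x\}=S\cap\T(\d_S(p),x+1)$ and invoking Lemma~\ref{lemma14}, you obtain in one stroke both that $S\setminus\{x\}$ is a numerical semigroup (hence $x\in\msg(S)$, which you never need to check separately) and that it is saturated. A more direct verification---checking $s+\d_{S'}(s)\in S'$ element by element---works too, but for $s>x$ one has only $\d_S(s)\mid \d_{S'}(s)\mid \d_S(p)$, so $\d_{S'}(s)$ may strictly exceed $\d_S(s)$, and one must appeal to Lemma~\ref{lemma39}(3) to conclude $s+\d_{S'}(s)\in S$. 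Your intersection trick sidesteps that case analysis entirely. The reduction of condition~2) to the single inequality $\d_S(p)\nmid x$ via monotonicity of $\d_S$, and the observation that its failure forces $p+\d_S(p)=x$, are exactly the right preliminary moves.
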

\begin{lemma}\label{lemma38} Let $S\in \Sat(F)$ and $s\in S$ such that $0<s<F$ and $\d_S(s)\neq \d_S(s')$ for all $s'\in S$ being $0<s'<s.$ If $X$ is a $\Sat(F)$-system of generators of $S,$ then $s\in X.$	
\end{lemma}
\begin{proof}
	By Lemma \ref{lemma37}, we know that $S\backslash \{s\}$ is an element of $\Sat(F).$ If $s\notin X,$ then $X\subseteq S\backslash \{s\}$ and, by applying Proposition \ref{proposition36}, we have that $S=\Sat(F)[X]\subseteq S\backslash \{s\},$ which is  absurd. 
\end{proof}

The following result can be consulted in \cite[Theorem 4]{houston1}.
\begin{lemma}\label{lemma39}	Let $A \subseteq \N$ such that $0\in A$ and $\gcd(A)=1.$ Then the following conditions are equivalent.
	\begin{enumerate}
		\item[1)]$A$ is a saturated numerical semigroup.
		\item[2)]$a+\d_A(a)\in A$ for all $a\in A.$
		\item[3)]$a+k\cdot \d_A(a)\in A$ for all $(a,k)\in A\times \N.$		
	\end{enumerate}	
\end{lemma}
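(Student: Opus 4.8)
The plan is to prove the three conditions equivalent through the cycle $(1)\Rightarrow(2)\Rightarrow(3)\Rightarrow(1)$, isolating the one genuinely inductive step. The implication $(1)\Rightarrow(2)$ is essentially a restatement of the definition of saturated: if $A$ is a saturated numerical semigroup, then $a+\d_A(a)\in A$ for every $a\in A\backslash\{0\}$ by definition, and for $a=0$ the membership $a+\d_A(a)=0\in A$ is trivial. Thus (2) holds for all $a\in A$.

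The core of the argument is $(2)\Rightarrow(3)$. The obstacle here is that $\d_A$ need not be constant along the progression $a,\,a+\d_A(a),\,a+2\d_A(a),\dots$: as one passes to larger elements the set $\{x\in A\mid x\le b\}$ grows, so $\d_A(b)$ can only shrink (with respect to divisibility), and the naive induction ``add $\d_A(a)$ at each step'' does not obviously remain inside $A$. To circumvent this I would first prove the sharper \emph{sub-lemma}: assuming (2), for every $b\in A$ and every $t\in\N$ with $\d_A(b)\mid t$ one has $b+t\in A$. This yields (3) immediately, since for $a\in A$, $k\in\N$ and $d=\d_A(a)$ we have $d\mid kd$, whence $a+kd\in A$. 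The sub-lemma itself I would prove by strong induction on $t$. The case $t=0$ is trivial, and when $b=0$ the hypothesis forces $t=0$. For $b>0$ and $t\ge 1$, set $e=\d_A(b)\ge 1$; by (2), $b':=b+e\in A$. Monotonicity of $\d_A$ gives $\d_A(b')\mid e$, and since $\d_A(b')\mid e\mid t$ we obtain $\d_A(b')\mid(t-e)$ with $0\le t-e<t$. The inductive hypothesis applied to $b'$ and $t-e$ then gives $b'+(t-e)=b+t\in A$, completing the step. The features that make this run are that $\d_A(b)\mid b$ and that $\d_A$ is non-increasing as $b$ grows.

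Finally, $(3)\Rightarrow(1)$ requires showing that $A$ is a numerical semigroup and that it is saturated. Closure under addition follows from (3): given $a,b\in A$ with $a\le b$, we have $\d_A(b)\mid a$ (because $a$ lies in the set defining $\d_A(b)$), so $a=k\,\d_A(b)$ for some $k\in\N$ and hence $a+b=b+k\,\d_A(b)\in A$. Finiteness of $\N\backslash A$ follows because $\gcd(A)=1$ forces a finite subset of $A$ with $\gcd$ equal to $1$; letting $c$ be its maximum gives $\d_A(c)=1$ (the chosen subset lies in $\{x\in A\mid x\le c\}$), and then (3) yields $c+k\in A$ for all $k\in\N$, so $A$ contains the tail $\{c,c+1,\dots\}$. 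Together with $0\in A$ this makes $A$ a numerical semigroup, and the saturated condition is then precisely the $k=1$ instance of (3).

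I expect the only real difficulty to be the formulation of $(2)\Rightarrow(3)$: the induction must be carried out on the \emph{amount added} $t$ rather than on the multiplier $k$, and the entire family of elements $b\in A$ must be threaded through the induction so that the step can migrate from $b$ to $b'=b+\d_A(b)$ while preserving the required divisibility. Once the sub-lemma is phrased this way, all three implications are short.
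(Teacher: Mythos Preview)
Your argument is correct. The cycle $(1)\Rightarrow(2)\Rightarrow(3)\Rightarrow(1)$ works, and the device of inducting on the amount $t$ (rather than on the multiplier $k$) in your sub-lemma is exactly what is needed to absorb the possible drop of $\d_A$ along the progression; the verifications of closure and cofiniteness in $(3)\Rightarrow(1)$ are also sound.

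As for the comparison: the paper does not prove this lemma at all. It merely quotes it as Theorem~4 of \cite{houston1} (Rosales, Garc\'{\i}a-S\'anchez, Garc\'{\i}a-Garc\'{\i}a and Branco, \emph{Saturated numerical semigroups}, Houston J.\ Math.\ 30 (2004)), so there is no in-paper argument to set yours against. Your write-up therefore supplies a self-contained proof where the paper relies on an external reference; nothing needs to be changed.
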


\begin{lemma}\label{lemma40} Let $S\in \Sat(F)$ and $X=\{x\in S\backslash \{0\}\mid \d_S(x)\neq \d_S(y) \mbox{ for all } y\in S \mbox{ with }y<x \mbox{ and }x<F\}.$ Then $\Sat(F)[X]=S.$	
\end{lemma}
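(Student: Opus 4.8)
The plan is to show the two inclusions $\Sat(F)[X]\subseteq S$ and $S\subseteq \Sat(F)[X]$, where $X=\{x\in S\backslash\{0\}\mid \d_S(x)\neq \d_S(y)\text{ for all }y\in S\text{ with }y<x\text{ and }x<F\}$. The first inclusion is essentially immediate: by construction $X\subseteq S\backslash\{0\}$ and $X\cap\{0,F+1,\rightarrow\}=\emptyset$ (every element of $X$ satisfies $x<F$, and $0\notin X$), so $X$ is a $\Sat(F)$-set contained in $S$; hence by Proposition \ref{proposition36}, $\Sat(F)[X]$ is the smallest element of $\Sat(F)$ containing $X$, and since $S$ is one such element, $\Sat(F)[X]\subseteq S$.

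The substantive direction is $S\subseteq \Sat(F)[X]$. Set $T=\Sat(F)[X]$; I would show every $s\in S$ lies in $T$. Clearly $0\in T$ and all integers $\geq F+1$ lie in $T$ (since $T\in\Sat(F)$ has Frobenius number $F$), so it suffices to handle $s\in S$ with $0<s<F$, together with $s=F$ — but $F\notin S$, so in fact only $0<s<F$ matters. The key idea is to reduce an arbitrary small element of $S$ to a generator in $X$ using the divisor structure. Fix $s\in S$ with $0<s<F$. If $\d_S(s)\neq \d_S(y)$ for all $y\in S$ with $y<s$, then by definition $s\in X\subseteq T$ and we are done. Otherwise there exists $y\in S$ with $0<y<s$ and $\d_S(y)=\d_S(s)$; I would take the least such small element realizing this divisor value and call it $x_0$, so that $x_0\in X$.

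The heart of the argument is then to express $s$ in terms of $x_0$ using the saturation criterion of Lemma \ref{lemma39}. Because $\d_S$ is a gcd over an initial segment of $S$, it is nonincreasing as the argument grows; so if $x_0\leq s$ and $\d_S(x_0)=\d_S(s)$, the common value $d=\d_S(x_0)$ divides the difference $s-x_0$, giving $s=x_0+k\,d$ with $d=\d_S(x_0)$ and $k\in\N$. Applying part (3) of Lemma \ref{lemma39} to the saturated semigroup $T$ (once we know $x_0\in T$), we get $x_0+k\cdot\d_T(x_0)\in T$; the remaining point is to verify $\d_T(x_0)=\d_S(x_0)$, which holds because $T$ and $S$ agree on all elements up to $x_0$ — indeed every generator in $X$ below $x_0$ lies in $T$, and these, together with $0$, determine the relevant gcd. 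This yields $s=x_0+k\,\d_T(x_0)\in T$, as desired.

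The main obstacle I anticipate is the bookkeeping needed to justify $\d_T(x_0)=\d_S(x_0)$ and, more delicately, that the elements of $X$ below a given point really do reconstruct all small elements of $S$ below that point, so that the gcd computed in $T$ matches that in $S$. A clean way to organize this is by strong induction on the small elements of $S$: assuming all elements of $S$ strictly below $s$ already lie in $T$, one shows $\{s'\in S\mid s'\leq x_0\}\subseteq T$ forces $\d_T(x_0)=\gcd\{s'\in T\mid s'\leq x_0\}=\gcd\{s'\in S\mid s'\leq x_0\}=\d_S(x_0)$, after which Lemma \ref{lemma39}(3) closes the step. The monotonicity of $\d_S$ and the minimality in the definition of $X$ are exactly what make the inductive reduction terminate at a genuine generator, so those two facts are the pivotal ingredients to state carefully.
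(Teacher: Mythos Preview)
Your proposal is correct and follows essentially the same route as the paper: both arguments first get $\Sat(F)[X]\subseteq S$ from Proposition~\ref{proposition36}, then for $s\in S$ with $0<s<F$ locate the element $x_0\in X$ with $\d_S(x_0)=\d_S(s)$ (the paper writes $X=\{x_1<\cdots<x_n\}$ and takes $x_k=\max\{x\in X\mid x\le s\}$, which is your $x_0$), write $s=x_0+t\cdot\d_S(x_0)$, and apply Lemma~\ref{lemma39} to $T=\Sat(F)[X]$. The only difference is cosmetic: the paper simply asserts $\d_S(x_k)=\d_T(x_k)$, while you propose a strong induction to justify it; either way the underlying reason is that $\{x_1,\dots,x_k\}\subseteq T\subseteq S$ and $\gcd\{x_1,\dots,x_k\}=\d_S(x_k)$.
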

\begin{proof}
	Let $T=\Sat(F)[X].$ As $X\subseteq S,$ then by applying Proposition \ref{proposition36}, we have that $T\subseteq S.$ Now we will see the reverse inclusion, that is, $S\subseteq T.$ Assume that $X=\{x_1,\dots, x_n\},$ $s\in S\backslash \{0\}$ and $x_1<\cdots<x_k\leq s<x_{k+1}<\cdots<x_r.$ Then $\d_S(s)=\d_S(x_k)=\d_T(x_k)$ and $s=x_k+a$ for some $a\in \N.$ We deduce then $\d_S(x_k)\mid a$ and so $s=x_k+t\cdot \d_S(x_k)$ for some $t\in \N.$ Consequently, by applying Lemma \ref{lemma39}, $s=x_k+t\cdot \d_T(x_k)\in T.$
\end{proof}

As a consequence of Lemmas \ref{lemma38} y \ref{lemma40}, we can assert that the minimal $\Sat(F)$-system of generators is unique. This is the content of the following proposition.
\begin{proposition}\label{proposition41}
If $S\in \Sat(F),$ then the unique minimal $\Sat(F)$-system of generators of $S$ is the set
$$
\{x\in S\backslash \{0\}\mid x<F \mbox{ and }\d_S(x)\neq \d_S(y) \mbox{ for all } y\in S \mbox{ such that }y<x\}.
$$
\end{proposition}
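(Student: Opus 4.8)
The plan is to prove Proposition~\ref{proposition41} by combining the two lemmas that immediately precede it, thereby establishing both that the displayed set is a $\Sat(F)$-system of generators and that it is the \emph{unique minimal} one. Write $X=\{x\in S\backslash \{0\}\mid x<F \text{ and }\d_S(x)\neq \d_S(y) \text{ for all } y\in S \text{ with }y<x\}$ for the displayed set.

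First I would verify that $X$ is genuinely a $\Sat(F)$-set, so that $\Sat(F)[X]$ is defined: since every element of $X$ lies in $S\backslash\{0\}$ and is strictly less than $F$, we have $X\cap\Delta(F+1)=X\cap\{0,F+1,\rightarrow\}=\emptyset$, and $X\subseteq S\in\Sat(F)$, so both defining conditions hold. Next, Lemma~\ref{lemma40} gives directly that $\Sat(F)[X]=S$, which shows $X$ is a $\Sat(F)$-system of generators of $S$. So it remains only to prove minimality and uniqueness.

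For this, the key observation is that $X$ is exactly the set of elements forced to lie in \emph{every} $\Sat(F)$-system of generators: Lemma~\ref{lemma38} asserts that if $s\in S$ satisfies $0<s<F$ and $\d_S(s)\neq\d_S(s')$ for all $s'\in S$ with $0<s'<s$, then $s$ belongs to every $\Sat(F)$-system of generators of $S$. But this condition on $s$ is precisely membership in $X$. Hence every $\Sat(F)$-system of generators of $S$ contains $X$. In particular, if $Y$ is any $\Sat(F)$-system of generators with $Y\subseteq X$, then $X\subseteq Y$ as well, forcing $Y=X$; and no proper subset $Y\subsetneq X$ can satisfy $\Sat(F)[Y]=S$, since such a $Y$ would be a $\Sat(F)$-system of generators not containing all of $X$, contradicting Lemma~\ref{lemma38}. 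This establishes that $X$ is minimal and that it is contained in every $\Sat(F)$-system of generators, whence it is the unique minimal one.

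I do not expect a serious obstacle here, as the substantive work has been offloaded to Lemmas~\ref{lemma38} and~\ref{lemma40}. The one point requiring care is making explicit that the membership condition in the definition of $X$ coincides verbatim with the hypothesis of Lemma~\ref{lemma38}, so that ``$s\in X$'' and ``$s$ belongs to every $\Sat(F)$-system of generators'' may be freely interchanged; once that identification is stated cleanly, minimality and uniqueness follow formally from the containment ``$X\subseteq$ every system of generators'' together with the fact, from Lemma~\ref{lemma40}, that $X$ is itself a system of generators.
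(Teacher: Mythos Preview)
Your proposal is correct and follows exactly the approach the paper intends: the paper simply states that Proposition~\ref{proposition41} is a consequence of Lemmas~\ref{lemma38} and~\ref{lemma40}, and you have spelled out precisely how those two lemmas combine to give both that $X$ generates $S$ and that $X$ is contained in every $\Sat(F)$-system of generators, whence minimality and uniqueness. The only extra care you take---verifying that $X$ is a $\Sat(F)$-set and that the membership condition in $X$ matches the hypothesis of Lemma~\ref{lemma38}---is appropriate and does not deviate from the paper's route.
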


If $S\in\Sat(F),$ then we denote by $\Sat(F)\msg(S)$ the minimal $\Sat(F)$-system of generators of $S.$ The cardinality of  $\Sat(F)\msg(S)$  is called the $\Sat(F)$-{\it rank} of $S$ and it will denote by $\Sat(F)$-${\rank}(S).$ Let us illustrate these two concepts with an example.

\begin{example}\label{example42}
	It is clear that $S=\{0,4,8,10,12,14,16,18,20,22,\rightarrow\}\in \Sat(21).$ By applying Proposition \ref{proposition41}, we obtain that $\Sat(21)\msg(S)=\{4,10\}.$ Therefore, $\Sat(21)$-${\rank}(S)=2.$
	
\end{example}

By applying Lemma \ref{lemma39}, we can easily prove  the following result. 
\begin{lemma}\label{lemma43}
	Let $n_1<n_2<\cdots<n_p<F$  be positive integers, $d=\gcd\{n_1,\cdots,\\
	n_p\}$ and $d\nmid F.$ For every $i\in \{1,\cdots, p\},$ let $d_i=\gcd\{n_1,\cdots,n_i\},$ for each $j\in \{1,\cdots,p-1\},$ let $k_j=\max\{k\in \N\mid n_j+kd_j<n_{j+1}\}$ and $k_p=\max\{k\in \N\mid n_p+kd_p<F\}.$ Then $\Sat(F)[\{n_1,\cdots,n_p\}]=\{0,n_1,n_1+d_1,\cdots,n_1+k_1d_1, n_2,n_2+d_2,\cdots,n_2+k_2d_2,\cdots, n_{p-1},n_{p-1}+d_{p-1},\cdots,n_{p-1}+k_{p-1}d_{p-1},n_p,n_p+d_p,\cdots,n_p+k_pd_p,F+1,\rightarrow\}.$
\end{lemma}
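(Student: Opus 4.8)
The plan is to verify that the explicit set on the right-hand side, call it $T$, equals $\Sat(F)[\{n_1,\dots,n_p\}]$. Since $\Sat(F)[X]$ is by Proposition \ref{proposition36} the smallest element of $\Sat(F)$ containing $X=\{n_1,\dots,n_p\}$, I would establish three things: first that $T$ is a saturated numerical semigroup with $\F(T)=F$ (so $T\in\Sat(F)$); second that $\{n_1,\dots,n_p\}\subseteq T$; and third that $T$ is contained in \emph{every} element of $\Sat(F)$ that contains $X$. The containment $X\subseteq T$ is immediate from the description of $T$, since each $n_j$ appears explicitly as the base point $n_j+0\cdot d_j$.

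First I would unwind the notation to see what $T$ actually is. The construction partitions the interval $[n_1,F]$ into blocks $[n_j,n_{j+1})$ (with the last block $[n_p,F]$), and inside the $j$-th block it lists exactly the arithmetic progression $n_j, n_j+d_j, n_j+2d_j,\dots$ up to but not past the next breakpoint, where $d_j=\gcd\{n_1,\dots,n_j\}$. Because $d_{j}\mid d_{j-1}\mid\cdots\mid d_1$, the divisors are nested, and one checks that for any $s\in T$ with $n_k\le s<n_{k+1}$ one has $s\equiv n_k \pmod{d_k}$ and moreover $\d_T(s)=d_k$: the gcd of all elements of $T$ up to $s$ is governed by the smallest base point $n_1$ together with the refinements, and stabilizes at $d_k$ on the $k$-th block. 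Establishing this identity $\d_T(s)=d_{k}$ for $s$ in the $k$-th block is the technical heart of the argument.

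With that in hand, the saturation of $T$ follows from Lemma \ref{lemma39}: I must show $s+\d_T(s)\in T$ for every $s\in T$. If $s$ lies in the $k$-th block and is not the last listed element of that block, then $s+d_k=s+\d_T(s)$ is the next listed element and lies in $T$. If $s$ is the last element of the block, i.e. $n_k+k_kd_k$, then by the maximality defining $k_k$ the next value $s+d_k$ either lands at or beyond $n_{k+1}$ — and here one uses $d_{k+1}\mid d_k$ together with $n_{k+1}\equiv ?$ to confirm $s+d_k$ is either an element of the next block's progression or lies in $\{F+1,\rightarrow\}\subseteq T$. For $s\ge F+1$ the tail is all of $\N$, so closure is automatic. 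Closure under addition and $\F(T)=F$ then come out: $F\notin T$ by construction (the last block stops strictly below $F$ since $d\nmid F$ guarantees $n_p+k_pd_p\neq F$), and $F+1,\rightarrow\in T$.

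Finally, for minimality I would take any $S\in\Sat(F)$ with $X\subseteq S$ and show $T\subseteq S$. Each base point $n_j\in S$, and since $S$ is saturated, Lemma \ref{lemma39}(3) gives $n_j+k\,\d_S(n_j)\in S$ for all $k\in\N$. The point is that $\d_S(n_j)\mid d_j$ — because $n_1,\dots,n_j\in S$ forces $\d_S(n_j)$ to divide their gcd $d_j$ — so every element $n_j+id_j$ of the $j$-th progression is reachable as $n_j+(id_j/\d_S(n_j))\d_S(n_j)\in S$. Hence all listed elements of $T$ lie in $S$, and the tail $\{F+1,\rightarrow\}\subseteq S$ trivially since $\F(S)=F$; thus $T\subseteq S$. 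The main obstacle I anticipate is the careful bookkeeping at the block boundaries in the saturation step, ensuring the last term of one progression feeds correctly into the next under the nested-divisibility hypothesis $a_{i+1}\mid a_i$ rendered here as $d_{j+1}\mid d_j$; everything else is routine once $\d_T(s)=d_k$ is pinned down.
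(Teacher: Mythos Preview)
Your proof is correct and is precisely the argument the paper has in mind: its own proof is the single line ``By applying Lemma~\ref{lemma39}, we can easily prove the following result,'' and you have supplied the details. The only refinements needed are that at a block boundary $s+d_k$ may overshoot into some block $j>k+1$ rather than block $k{+}1$ (your divisibility argument works unchanged with $d_j\mid d_k$ in place of $d_{k+1}\mid d_k$), and that the hypothesis $d\nmid F$ is used at $k=p$ to rule out $s+d_p=F$ so that $s+d_p\ge F{+}1$, not to ensure $n_p+k_pd_p<F$ (which already holds by the definition of $k_p$).
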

As a consequence of Proposition \ref{proposition41} and Lemma \ref{lemma43}, we present in the following proposition a characterization of the minimal $\Sat(F)$-system of generators of $\Sat(F)[\{n_1,\cdots,n_p\}].$
\begin{proposition}\label{proposition44}
	Let $n_1<n_2<\cdots<n_p<F$ be positive integers, $d=\gcd\{n_1,\cdots,n_p\}$ and $d\nmid F.$ Then  $\{n_1,\cdots,n_p\}$ is the minimal $\Sat(F)$-system of generators of $\Sat(F)[\{n_1,\cdots,n_p\}]$ if and only if $\gcd\{n_1,\cdots,n_i\}\neq \gcd\{n_1,\cdots,\\n_{i+1}\}$ for all $i\in \{1,\cdots, p-1\}.$
\end{proposition}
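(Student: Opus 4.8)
The plan is to prove Proposition~\ref{proposition44} by characterizing exactly when a generator $n_i$ is redundant in the set $\{n_1,\dots,n_p\}$, using the explicit description of the minimal $\Sat(F)$-system of generators given in Proposition~\ref{proposition41}. Recall that $S=\Sat(F)[\{n_1,\dots,n_p\}]$ is computed explicitly in Lemma~\ref{lemma43}, and Proposition~\ref{proposition41} tells us that the minimal $\Sat(F)$-system of generators of $S$ is $\{x\in S\backslash\{0\}\mid x<F \text{ and } \d_S(x)\neq \d_S(y) \text{ for all } y\in S \text{ with } y<x\}$. So the whole statement reduces to comparing the set $\{n_1,\dots,n_p\}$ with this intrinsic set of ``jump points'' of the function $\d_S$, and the condition $\gcd\{n_1,\dots,n_i\}\neq\gcd\{n_1,\dots,n_{i+1}\}$ should turn out to be precisely the condition that each $n_i$ is such a jump point.

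First I would set $d_i=\gcd\{n_1,\dots,n_i\}$ and analyze the behavior of $\d_S$ along the explicit listing of $S$ from Lemma~\ref{lemma43}. The key observation is that within the block running from $n_i$ up to $n_{i+1}$ (exclusive), every element $s$ satisfies $\d_S(s)=d_i$: indeed these elements are exactly $n_i, n_i+d_i,\dots,n_i+k_id_i$, all of which are $\geq n_1$ and whose gcd together with the earlier small elements equals $d_i$. Consequently the value of $\d_S$ is constant and equal to $d_i$ on the $i$-th block, and the smallest element of $S$ on which $\d_S$ takes the value $d_i$ is $n_i$ itself (since $n_i$ is the first element not below it that is reached). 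Therefore $n_i$ is a jump point of $\d_S$ — i.e.\ $\d_S(n_i)\neq\d_S(y)$ for all $y<n_i$ in $S$ — if and only if $d_i\neq d_{i-1}$, because $d_{i-1}$ is the value of $\d_S$ on the whole preceding block and on all earlier elements.

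Making this precise is the crux. I would argue that for each $i$, the value $d_i$ first appears in $S$ precisely at $n_i$, and hence $\d_S(n_i)\neq\d_S(y)$ for all $y\in S$ with $y<n_i$ exactly when $d_i$ differs from all previous gcd-values, which by the nesting $d_1\geq d_2\geq\cdots$ (each $d_{i+1}$ divides $d_i$) is equivalent to $d_i\neq d_{i-1}$. Running this over $i\in\{2,\dots,p\}$ and noting that $n_1$ is always a jump point (with $\d_S(n_1)=d_1=n_1$, and no smaller nonzero element exists), the full set $\{n_1,\dots,n_p\}$ equals the intrinsic generating set of Proposition~\ref{proposition41} if and only if no $n_i$ is redundant, which is exactly the condition $d_i\neq d_{i+1}$ for all $i\in\{1,\dots,p-1\}$. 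For the converse direction, if some $d_i=d_{i+1}$, then $n_{i+1}$ lies in the arithmetic progression starting at $n_i$ with step $d_i$ (it is one of the elements $n_i+kd_i$ forced into $S$), so $\d_S(n_{i+1})=d_i=\d_S(n_i)$ with $n_i<n_{i+1}$, and thus $n_{i+1}$ fails the jump-point test and is not in the minimal system, showing $\{n_1,\dots,n_p\}$ is not minimal.

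The main obstacle I anticipate is the careful bookkeeping in the ``value-first-appears'' claim: one must verify that no element strictly between consecutive generators, and no element of an earlier block, ever realizes the value $d_i$ before $n_i$. This is where Lemma~\ref{lemma43}'s explicit block structure is essential, since it guarantees that all elements preceding $n_i$ lie in earlier blocks with gcd-values $d_1,\dots,d_{i-1}$ (or are the forced progression-elements of those blocks), each of which is a multiple of $d_i$ but strictly larger, so none equals $d_i$ under the hypothesis $d_i\neq d_{i-1}$. Once this structural fact is nailed down, the equivalence follows directly by matching against the characterization in Proposition~\ref{proposition41}, and both implications are essentially immediate.
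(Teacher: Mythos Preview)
Your approach is correct and is precisely the one the paper intends: the paper states Proposition~\ref{proposition44} as a direct consequence of Proposition~\ref{proposition41} and Lemma~\ref{lemma43} without further detail, and your argument fills in exactly that deduction by computing $\d_S$ block-by-block on the explicit description of $S$ from Lemma~\ref{lemma43} and matching against the jump-point criterion of Proposition~\ref{proposition41}. The only thing to tighten is to state explicitly that the minimal system given by Proposition~\ref{proposition41} equals $\{n_i : d_i\neq d_{i-1}\}$ (with $n_1$ always included), since this simultaneously handles both directions of the equivalence and also confirms that no spurious jump points arise outside the set $\{n_1,\dots,n_p\}$.
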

\begin{example}\label{example45}
	By applying Lemma \ref{lemma43}, we deduce that  $\Sat(51)[\{18,28,42\}]=\{0,8,16,24,28,32,36,40,42,44,46,48,50,52,\rightarrow\}.$ Moreover, as $\gcd\{8\}>\\ \gcd\{8,28\}>\gcd\{8,28,42\},$ then, by Proposition \ref{proposition44}, we know that 
	$\{8,28,42\}$ is the minimal $\Sat(51)$-system of generators of $\Sat(51)[\{8,28,42\}].$
\end{example}
The following result is a direct consequence of Proposition \ref{proposition41}.
\begin{lemma}\label{lemma46}
	If $S\in \Sat(F)$ and $S\neq \Delta(F+1),$ then $\m(S)\in \Sat(F)\msg(S).$
\end{lemma}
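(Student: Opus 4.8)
The plan is to read the result straight off the explicit description of the minimal $\Sat(F)$-system of generators provided by Proposition \ref{proposition41}. That proposition identifies $\Sat(F)\msg(S)$ with the set
$$
\{x\in S\backslash \{0\}\mid x<F \mbox{ and }\d_S(x)\neq \d_S(y) \mbox{ for all } y\in S \mbox{ such that }y<x\},
$$
so proving the lemma amounts to checking that $x=\m(S)$ satisfies the three membership conditions: $\m(S)\in S\backslash\{0\}$, $\m(S)<F$, and $\d_S(\m(S))\neq \d_S(y)$ for every $y\in S$ with $y<\m(S)$.

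First I would dispose of the two easy conditions. That $\m(S)\in S\backslash\{0\}$ is immediate from $\m(S)=\min(S\backslash\{0\})$. For $\m(S)<F$, I would invoke Lemma \ref{lemma15}, which states that $\Delta(F+1)$ is the minimum of $\Sat(F)$; thus $S\neq\Delta(F+1)$ forces $\Delta(F+1)\subsetneq S$, so $S$ contains a positive element below $F+1$ and hence $\m(S)\leq F$. Since $F=\F(S)\notin S$ whereas $\m(S)\in S$, we have $\m(S)\neq F$, giving $\m(S)<F$.

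The remaining condition is the only one meriting care, and it hinges on the edge case $y=0$. Because $\m(S)$ is the least nonzero element of $S$, the sole element $y\in S$ with $y<\m(S)$ is $y=0$. Evaluating $\d_S(a)=\gcd\{x\in S\mid x\leq a\}$ yields $\d_S(\m(S))=\gcd\{0,\m(S)\}=\m(S)$ and $\d_S(0)=\gcd\{0\}=0$, so these two values differ. All three conditions then hold, placing $\m(S)$ in the set of Proposition \ref{proposition41}, i.e. in $\Sat(F)\msg(S)$. I do not expect a genuine obstacle: the whole argument is a specialization of Proposition \ref{proposition41}, and the only subtle points are the strictness $\m(S)<F$ (which relies on $F\notin S$) and the correct reading of $\d_S(0)$ in the last condition.
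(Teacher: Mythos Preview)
Your proof is correct and follows exactly the route indicated in the paper, which simply records that the lemma is a direct consequence of Proposition~\ref{proposition41}; you have merely spelled out the verification that $\m(S)$ satisfies the three membership conditions. The only minor remark is that the condition on $y$ could also be read (as in Lemma~\ref{lemma38}) with $y>0$, in which case it is vacuous for $x=\m(S)$, but your handling of the $y=0$ case via $\d_S(0)=0$ covers the literal formulation of Proposition~\ref{proposition41} as well.
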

By applying Proposition \ref{proposition12}, Lemma \ref{lemma37} and Proposition \ref{proposition41}, we have the following result. 

\begin{proposition}\label{proposition47}
	If $S\in\Sat(F),$ then the following conditions are verified.
	\begin{enumerate}
		\item[1)] $\Sat(F)$-${\rank}(S)\leq \e(S).$
		\item[2)] $\Sat(F)$-${\rank}(S)=0$ if and only if $S=\Delta(F+1).$
		\item[3)] $\Sat(F)$-${\rank}(S)=1$ if and only if $\Sat(F)\msg(S)=\{\m(S)\}.$		
	\end{enumerate}
\end{proposition}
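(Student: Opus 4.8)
The plan is to establish the three statements in order, relying mainly on Proposition 41 (the explicit description of $\Sat(F)\msg(S)$) and on Lemmas 37 and 46.

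For statement (1), I would argue that $\Sat(F)\msg(S)$ is contained in $\msg(S)$. By Proposition 41, every element $x$ of $\Sat(F)\msg(S)$ satisfies $x<F$ and $\d_S(x)\neq \d_S(y)$ for all $y\in S$ with $y<x$. In particular $x$ is the smallest element of $S\setminus\{0\}$ with its value of $\d_S$, so by Lemma 37 the set $S\setminus\{x\}$ is again a saturated numerical semigroup; by Proposition 12(2) this forces $x\in\msg(S)$. Hence $\Sat(F)\msg(S)\subseteq\msg(S)$, and taking cardinalities yields $\Sat(F)\text{-}\rank(S)\leq\e(S)$.

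For statement (2), I would use Proposition 41 directly. If $S=\Delta(F+1)$, then $S=\{0,F+1,\rightarrow\}$ has no element $x$ with $0<x<F$, so the set described in Proposition 41 is empty and the rank is $0$. Conversely, if $S\neq\Delta(F+1)$, then $\m(S)<F$ (since $\Delta(F+1)$ is the minimum of $\Sat(F)$ by Lemma 15(2), any strictly larger $S$ has an element below $F+1$, namely its multiplicity). By Lemma 46, $\m(S)\in\Sat(F)\msg(S)$, so the minimal generating set is nonempty and the rank is positive. This gives the equivalence.

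For statement (3), the forward direction uses Lemma 46 together with statement (2): if the rank is $1$, then $S\neq\Delta(F+1)$, so $\m(S)\in\Sat(F)\msg(S)$ by Lemma 46, and since the set has exactly one element it must equal $\{\m(S)\}$. The reverse direction is immediate, since $\Sat(F)\msg(S)=\{\m(S)\}$ is a set of cardinality $1$. I do not anticipate a serious obstacle here; the only point requiring a little care is verifying in statement (1) that the Proposition 41 condition genuinely matches the hypothesis of Lemma 37, namely that $x$ being the $\d_S$-minimal element of its value is exactly condition (2) of Lemma 37, so that the passage to $\msg(S)$ via Proposition 12(2) is justified.
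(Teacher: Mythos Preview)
Your argument is correct and follows essentially the same route as the paper, which simply cites Proposition~\ref{proposition12}, Lemma~\ref{lemma37} and Proposition~\ref{proposition41}; you have merely spelled out the details (and invoked Lemma~\ref{lemma46}, itself an immediate consequence of Proposition~\ref{proposition41}, for parts (2) and (3)). The only cosmetic point is that in part (2) you need not argue separately that $\m(S)<F$, since Lemma~\ref{lemma46} already delivers $\m(S)\in\Sat(F)\msg(S)$ directly from $S\neq\Delta(F+1)$.
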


As a consequence of Proposition \ref{proposition47} and Lemma \ref{lemma43}, we have the following result.
\begin{corollary}\label{corollary48} Under the standing notation, the following conditions are equivalent.
	\begin{enumerate}
		\item[1)] $S\in \Sat(F)$ and $\Sat(F)$-${\rank}(S)=1.$
		\item[2)] There is $m\in \N$ such that $2\leq m <F,$  $m\nmid F$ and $S=T(m,F+1).$
	\end{enumerate}
\end{corollary}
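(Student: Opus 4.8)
The statement to prove is Corollary \ref{corollary48}, asserting the equivalence of ``$S\in\Sat(F)$ with $\Sat(F)$-$\rank(S)=1$'' and ``$S=\T(m,F+1)$ for some $m$ with $2\le m<F$ and $m\nmid F$.'' My plan is to prove the two implications separately, leaning on the already-established machinery, principally Proposition \ref{proposition47}(3) to translate the rank-one condition into the statement $\Sat(F)\msg(S)=\{\m(S)\}$, and Lemma \ref{lemma43} to pin down exactly which numerical semigroup a singleton $\Sat(F)$-set generates.

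\medskip

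\noindent\textbf{Proof of $(1)\Rightarrow(2)$.} Suppose $S\in\Sat(F)$ with $\Sat(F)$-$\rank(S)=1$. By Proposition \ref{proposition47}(3), the minimal $\Sat(F)$-system of generators is $\{\m(S)\}$, so $S=\Sat(F)[\{\m(S)\}]$. Write $m=\m(S)$. Since $S\ne\N$ we have $m\ge 2$, and since $\m(S)\le\F(S)=F$ with $m\notin\{F\}$ forced by the fact that $m$ is a generator strictly below $F$ (indeed $m<F$ because $S\ne\Delta(F+1)$ and every element of the $\Sat(F)$-system lies below $F$), we get $2\le m<F$. Moreover $m\nmid F$: if $m\mid F$ then $\gcd$ of the small elements generated by $m$ would divide $F$, contradicting $\F(S)=F$ (concretely, $\T(m,F+1)$ is a numerical semigroup with Frobenius number $F$ only when $m\nmid F$, since otherwise $F$ itself would be a multiple of $m$ and hence belong to $\langle m\rangle\subseteq S$). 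Finally, applying Lemma \ref{lemma43} to the single generator $n_1=m$ gives $d_1=m$ and $\Sat(F)[\{m\}]=\{0,m,2m,\dots,km,F+1,\rightarrow\}$ where $k=\max\{k\in\N\mid km<F\}$; this set is precisely $\langle m\rangle\cup\{x\in\N\mid x\ge F+1\}=\T(m,F+1)$, so $S=\T(m,F+1)$.

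\medskip

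\noindent\textbf{Proof of $(2)\Rightarrow(1)$.} Conversely, suppose $S=\T(m,F+1)$ with $2\le m<F$ and $m\nmid F$. First one checks $S\in\Sat(F)$: it is a numerical semigroup with $\F(S)=F$ (the condition $m\nmid F$ guarantees $F\notin\langle m\rangle$, hence $F\notin S$ while $F+1,\rightarrow\in S$), and it is saturated by Lemma \ref{lemma28} taking the single pair $(a_1,b_1)=(m,F+1)$, so that $S=\T(m,F+1)$ is the required intersection. By Lemma \ref{lemma43} with $n_1=m$ we have $\Sat(F)[\{m\}]=\T(m,F+1)=S$, so $\{m\}$ is a $\Sat(F)$-system of generators of $S$; it is visibly minimal (no proper subset of a singleton generates a nontrivial semigroup, since $S\ne\Delta(F+1)$). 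Hence $\Sat(F)\msg(S)=\{m\}=\{\m(S)\}$, and Proposition \ref{proposition47}(3) yields $\Sat(F)$-$\rank(S)=1$.

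\medskip

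\noindent\textbf{Main obstacle.} The only genuinely delicate point is verifying the Frobenius-number and saturation bookkeeping for $\T(m,F+1)$ in the $(2)\Rightarrow(1)$ direction, and symmetrically extracting $2\le m<F$, $m\nmid F$ cleanly in $(1)\Rightarrow(2)$; everything else is a direct reading of Proposition \ref{proposition47}(3) together with the explicit description from Lemma \ref{lemma43}. I would make sure that the claim $F\notin\langle m\rangle\iff m\nmid F$ is stated carefully, since this is what links the hypothesis $m\nmid F$ to the requirement $\F(S)=F$.
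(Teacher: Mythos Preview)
Your proof is correct and follows the same route the paper indicates: the corollary is stated as an immediate consequence of Proposition~\ref{proposition47} and Lemma~\ref{lemma43}, and you unpack exactly those two ingredients. The only embellishment is your appeal to Lemma~\ref{lemma28} to confirm that $\T(m,F+1)$ is saturated in the $(2)\Rightarrow(1)$ direction, which is harmless and in fact clarifying.
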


\section{$\Sat(F)$-sequences}
Give $k\in \N\backslash \{0\}$, a $\Sat(F)$-{\it sequence, of lenght} $k,$ is a  $k$-sequence of positive integers $(d_1,d_2,\dots,d_k)$ such that $d_1>d_2>\dots>d_k,$ such that $d_{i+1}\mid d_i$ for all $i\in \{1,\cdots, k-1\}$ and $d_k\nmid F.$ 
\begin{theorem}\label{theorem49}
If $(d_1,d_2,\dots, d_p)$ is a $\Sat(F)$-sequence and $t_1,t_2,\cdots, t_p$ are positive integers such that $t_1d_1+\cdots+t_pd_p<F$ and $\gcd \displaystyle \left\{\frac{d_i}{d_{i+1}}, t_{i+1}\right\}=1$ for all $i\in \{1,\dots, p-1\},$ then $\{d_1,t_1d_1+t_2d_2,\cdots,t_1d_1+t_2d_2+\cdots+t_pd_p\}$ is the minimal $\Sat(F)$-system of generators of an element of $\Sat(F)$ with 	$\Sat(F)$-rank equal to $p.$ Moreover, every minimal $\Sat(F)$-system of generators of an element of $\Sat(F)$ with 	$\Sat(F)$-rank equal to $p,$ has this form.
\end{theorem}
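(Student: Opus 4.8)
The plan is to reduce both implications to the characterization of minimal $\Sat(F)$-systems of generators in Proposition \ref{proposition44}: for integers $n_1<\cdots<n_p<F$ with $d=\gcd\{n_1,\dots,n_p\}\nmid F$, the set $\{n_1,\dots,n_p\}$ is the minimal $\Sat(F)$-system of generators of $\Sat(F)[\{n_1,\dots,n_p\}]$ exactly when the partial gcds $\gcd\{n_1,\dots,n_i\}$ are pairwise distinct along the chain. Throughout I would set $n_1=d_1$ and $n_j=\sum_{i=1}^{j}t_id_i$ for $2\le j\le p$, matching the displayed elements, and the bookkeeping fact I would use repeatedly is that, since $d_p\mid d_{p-1}\mid\cdots\mid d_1$, the integer $d_{i-1}$ divides every term $t_\ell d_\ell$ with $\ell\le i-1$.

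For the existence assertion I would first check the ordering $0<n_1<\cdots<n_p<F$: the top inequality is the hypothesis $t_1d_1+\cdots+t_pd_p<F$, while $n_2-n_1=(t_1-1)d_1+t_2d_2\ge d_2>0$ and $n_j-n_{j-1}=t_jd_j>0$ for $j\ge3$ give strict monotonicity. Next I would prove by induction on $i$ that $\gcd\{n_1,\dots,n_i\}=d_i$. The base case is $\gcd\{n_1\}=d_1$; for the inductive step, using the divisibility remark above, $\gcd\{n_1,\dots,n_i\}=\gcd\{d_{i-1},n_i\}=\gcd\{d_{i-1},t_id_i\}$, and since $d_i\mid d_{i-1}$ this equals $d_i\gcd\{d_{i-1}/d_i,t_i\}=d_i$ by the hypothesis $\gcd\{d_i/d_{i+1},t_{i+1}\}=1$ read at index $i-1$. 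In particular $\gcd\{n_1,\dots,n_p\}=d_p\nmid F$, so by Lemma \ref{lemma43} the set is a $\Sat(F)$-set, and since $d_1>d_2>\cdots>d_p$ are distinct, Proposition \ref{proposition44} identifies $\{n_1,\dots,n_p\}$ as the minimal $\Sat(F)$-system of generators of $S=\Sat(F)[\{n_1,\dots,n_p\}]$; its cardinality $p$ is then the $\Sat(F)$-rank of $S$.

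For the converse I would start from an arbitrary minimal $\Sat(F)$-system of generators $\{n_1<\cdots<n_p\}$ of some $S\in\Sat(F)$. Putting $d_i=\gcd\{n_1,\dots,n_i\}$, the description in Lemma \ref{lemma43} forces $d_p\nmid F$ (otherwise the saturated closure would contain $F$), Proposition \ref{proposition41} gives $n_p<F$, and Proposition \ref{proposition44} gives $d_i\neq d_{i+1}$; together with $d_{i+1}\mid d_i$ this shows $(d_1,\dots,d_p)$ is a $\Sat(F)$-sequence. I would then recover the coefficients by setting $t_1=1$ and $t_j=(n_j-n_{j-1})/d_j$ for $j\ge2$; these are positive integers because $d_j\mid d_{j-1}\mid n_{j-1}$ and $d_j\mid n_j$ force $d_j\mid n_j-n_{j-1}>0$. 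A short induction then yields $n_j=\sum_{i=1}^{j}t_id_i$, the bound $\sum t_id_i=n_p<F$ is immediate, and the required coprimality $\gcd\{d_i/d_{i+1},t_{i+1}\}=1$ falls out of the same identity $d_j=\gcd\{d_{j-1},t_jd_j\}=d_j\gcd\{d_{j-1}/d_j,t_j\}$ used above.

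The only genuinely delicate point is the gcd computation in the inductive step, namely exploiting the nested divisibility $d_p\mid\cdots\mid d_1$ to collapse $\gcd\{d_{i-1},n_i\}$ to $\gcd\{d_{i-1},t_id_i\}$ and then factoring out $d_i$; everything else is routine monotonicity together with invocations of Lemma \ref{lemma43} and Proposition \ref{proposition44}. I would also keep an eye on the slightly asymmetric role of $t_1$ (the first generator is $d_1$, not $t_1d_1$), which is precisely why the choice $t_1=1$ is both legitimate and convenient in the converse.
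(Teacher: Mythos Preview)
Your proposal is correct and follows essentially the same route as the paper: both directions hinge on Proposition~\ref{proposition44}, the key computation in each case being that $\gcd\{n_1,\dots,n_i\}=d_i$, and in the converse the coefficients are recovered via $t_1=1$, $t_{j}=(n_j-n_{j-1})/d_j$ exactly as the paper does. Your write-up is in fact a bit more careful than the paper's about the strict ordering $n_1<\cdots<n_p$ and about the asymmetric role of $t_1$, but the underlying argument is the same.
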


\begin{proof}
	It is easy to see that $\gcd \{d_1,t_1d_1+t_2d_2,\cdots,t_1d_1+t_2d_2+\cdots+t_id_i\}=d_i$ for all $i\in \{1,\cdots,p\}.$ By aplying Proposition \ref{proposition44}, we obtain that $\{d_1,t_1d_1+t_2d_2,\cdots,t_1d_1+t_2d_2+\cdots+t_pd_p\}$ is the minimal $\Sat(F)$-system of generators of an element of $\Sat(F)$ with 	$\Sat(F)$-rank equal to $p.$
	
	Conversely, if $\{n_1<n_2<\dots <n_p\}$ is the minimal  $\Sat(F)$-system of generators of an element of $\Sat(F)$ and $d_i=\gcd\{n_1,\cdots,n_i\}$ for all $i\in \{1,\cdots,p\},$ then by applying Proposition \ref{proposition44} and Lemma \ref{lemma43}, we have that $(d_1,\dots, d_p)$ is a $\Sat(F)$-sequence. To conclude the proof, we will see that there are positive integers $t_1,\cdots, t_p$ such that $n_1=d_1,$ $n_2=t_1d_1+t_2d_2,\cdots, n_p=t_1d_1+t_2d_2+\cdots+t_pd_p$ and $\gcd \displaystyle \left\{\frac{d_i}{d_{i+1}}, t_{i+1}\right\}=1$ for all $i\in \{1,\dots, p-1\}.$
	Let $t_1=1$ and $t_{i+1}=\frac{n_{i+1}-n_i}{d_{i+1}}$ for all $i\in \{1,\cdots,p-1\}.$  Let us  prove,  by induction on $i,$ that $n_i=t_1d_1+\cdots+t_id_i$ for all $i\in \{2,\dots, p\}.$ For $i=2,$ the result is true since $t_1d_1+t_2d_2=1\cdot n_1+\frac{n_2-n_1}{d_2}d_2=n_2.$ As $n_{i+1}=n_i+t_{i+1}d_{i+1},$ then by induction hypothesis, we have  $n_{i+1}=t_1d_1+\cdots+t_id_i+t_{i+1}d_{i+1}.$ To conclude the proof, it suffices to show that $\gcd \displaystyle \left\{\frac{d_i}{d_{i+1}}, t_{i+1}\right\}=1$ for all $i\in \{1,\dots, p-1\}.$ In fact, $d_{i+1}=\gcd\{n_1,\cdots,n_{i+1}\}=\gcd\{\gcd\{n_1,\cdots,n_i\},n_{i+1}\}=\gcd\{d_i,t_1d_1+\cdots+t_id_i+t_{i+1}d_{i+1} \}=\gcd\{d_i,t_{i+1}d_{i+1} \}=d_{i+1}\cdot\gcd \displaystyle \left\{\frac{d_i}{d_{i+1}}, t_{i+1}\right\}.$ Therefore, $\gcd \displaystyle \left\{\frac{d_i}{d_{i+1}}, t_{i+1}\right\}=1.$
\end{proof}
As a direct consequence of the previous theorem, we have the following result.
\begin{corollary}\label{corollary50}If $(d_1,d_2,\dots, d_p)$ is a $\Sat(F)$-sequence and  $d_1+d_2+\cdots+d_p<F,$  then $\{d_1,d_1+d_2,\cdots,d_1+d_2+\cdots+d_p\}$ is the minimal $\Sat(F)$-system of generators of an element of $\Sat(F).$
\end{corollary}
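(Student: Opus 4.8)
The plan is to recognize this corollary as the special case of Theorem \ref{theorem49} obtained by setting all the multipliers $t_1, t_2, \dots, t_p$ equal to $1$. I would begin by fixing $t_i = 1$ for every $i \in \{1, \dots, p\}$ and then verify, one by one, that the three hypotheses of Theorem \ref{theorem49} hold for this choice.

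First, each $t_i = 1$ is a positive integer, so the integrality requirement on the multipliers is automatic. Second, the summation condition $t_1 d_1 + \cdots + t_p d_p < F$ collapses, with $t_i = 1$, to exactly $d_1 + d_2 + \cdots + d_p < F$, which is precisely the hypothesis of the corollary. Third, the coprimality condition $\gcd\{d_i/d_{i+1}, t_{i+1}\} = 1$ becomes $\gcd\{d_i/d_{i+1}, 1\} = 1$, which holds trivially for every $i$ since any integer is coprime to $1$. Having checked the hypotheses, I would invoke Theorem \ref{theorem49} to conclude that the set $\{d_1, t_1 d_1 + t_2 d_2, \dots, t_1 d_1 + \cdots + t_p d_p\}$ is the minimal $\Sat(F)$-system of generators of an element of $\Sat(F)$. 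Substituting $t_i = 1$, the generic element $t_1 d_1 + \cdots + t_k d_k$ simplifies to the partial sum $d_1 + \cdots + d_k$, so the set becomes $\{d_1, d_1+d_2, \dots, d_1 + \cdots + d_p\}$, which is exactly the set named in the statement.

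There is no genuine obstacle here, since the whole argument reduces to the substitution $t_i = 1$ together with the trivial fact that every integer is coprime to $1$. The only point deserving a moment's attention is confirming that the hypotheses of the corollary, namely that $(d_1, \dots, d_p)$ is a $\Sat(F)$-sequence together with $d_1 + \cdots + d_p < F$, are precisely what survive from the hypotheses of Theorem \ref{theorem49} under this specialization, which they are.
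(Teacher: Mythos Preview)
Your proof is correct and matches the paper's approach exactly: the paper states the corollary ``as a direct consequence of the previous theorem,'' and your argument makes explicit the specialization $t_i = 1$ for all $i$, checking that the coprimality and bound hypotheses of Theorem~\ref{theorem49} are then trivially satisfied.
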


As a consequence of Theorem \ref{theorem49} and Corollary \ref{corollary50}, if we want to compute all the elements belonging to $\Sat(F)$ with  	$\Sat(F)$-rank equal to $p$, it will be enough to do the following steps:
\begin{enumerate}
	\item[(1)] To compute $$\L(F,p)=\{(d_1,\dots, d_p)\mid (d_1,\dots, d_p) \mbox{ is a }  \Sat(F)\mbox{-sequence and }$$
	$d_1+\dots+ d_p<F\}.$
	\item[(2)] For every $(d_1,\dots, d_p)\in \L(F,p),$ compute
	$$
	\C(d_1,\dots, d_p)=\{(t_1,\dots, t_p)\in (\N\backslash\{0\})^p\mid t_1d_1+\cdots+t_pd_p<F \mbox{ and }$$
	$\gcd \displaystyle \left\{\frac{d_i}{d_{i+1}}, t_{i+1}\right\}=1\mbox{ for all }i\in \{1,\dots, p-1\}\}.
	$	
\end{enumerate}
\begin{proposition}\label{proposition51}If $\{a_1,a_2,\cdots,a_p\}\subseteq \N\backslash \{0,1\}$ and $a_1\nmid F,$ then $(a_1a_2\cdots a_p, \\
	a_1a_2\cdots a_{p-1},\cdots,a_1)$ is a $\Sat(F)$-sequence of length $p.$ Moreover, every $\Sat(F)$-sequence of length $p$ is of this form.
\end{proposition}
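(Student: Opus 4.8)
The statement to prove, Proposition~\ref{proposition51}, asserts a bijective-style characterization: a tuple $(a_1a_2\cdots a_p, a_1a_2\cdots a_{p-1}, \dots, a_1)$ with each $a_j \in \N\setminus\{0,1\}$ and $a_1 \nmid F$ is a $\Sat(F)$-sequence of length $p$, and conversely every $\Sat(F)$-sequence of length $p$ arises this way. Let me unpack the definition of a $\Sat(F)$-sequence and check what each condition demands.

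The plan is to prove both implications by a single telescoping computation that matches the decreasing divisor chain $(d_1,\dots,d_p)$ against the partial products of the $a_j$. First I would fix, once and for all, the dictionary between the two descriptions: given $\{a_1,\dots,a_p\}\subseteq\N\setminus\{0,1\}$, I set $d_i = a_1 a_2\cdots a_{p-i+1}$ for $i\in\{1,\dots,p\}$, so that $d_1 = a_1\cdots a_p$ and $d_p = a_1$. The governing identity is $d_i/d_{i+1}=a_{p-i+1}$ for $i\in\{1,\dots,p-1\}$, obtained by cancelling the common factor $a_1\cdots a_{p-i}$.

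For the forward direction I would verify the three defining conditions of a $\Sat(F)$-sequence directly from this identity. Divisibility $d_{i+1}\mid d_i$ is immediate since $d_i = a_{p-i+1}\,d_{i+1}$; strict decrease $d_i>d_{i+1}$ follows because each quotient $a_{p-i+1}\ge 2$; and the tail condition $d_p\nmid F$ is exactly the hypothesis $a_1\nmid F$, as $d_p=a_1$. This shows the proposed tuple is a $\Sat(F)$-sequence of length $p$.

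For the converse, starting from an arbitrary $\Sat(F)$-sequence $(d_1,\dots,d_p)$, I would define $a_1:=d_p$ and $a_{p-i+1}:=d_i/d_{i+1}$ for $i\in\{1,\dots,p-1\}$ (equivalently $a_j=d_{p-j+1}/d_{p-j+2}$ for $j\in\{2,\dots,p\}$). Each such $a_j$ is a genuine integer because $d_{i+1}\mid d_i$, and is at least $2$ because $d_i>d_{i+1}$ forces the quotient to exceed $1$; hence $\{a_2,\dots,a_p\}\subseteq\N\setminus\{0,1\}$. That $a_1=d_p$ also lies in $\N\setminus\{0,1\}$ is the one point worth isolating: since $d_p\nmid F$ and $1$ divides every integer, we must have $d_p\neq 1$, so $a_1\ge 2$, while $a_1=d_p\nmid F$ supplies the remaining hypothesis. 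Finally the telescoping product $a_1 a_2\cdots a_{p-i+1}=d_p\cdot\frac{d_{p-1}}{d_p}\cdots\frac{d_i}{d_{i+1}}=d_i$ recovers the original sequence in the required form.

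The argument is essentially all bookkeeping, so there is no deep obstacle; the only step requiring genuine care is the verification that $a_1=d_p\ge 2$. This is where the hypothesis $d_p\nmid F$ does double duty, serving simultaneously as condition~(3) in the definition of a $\Sat(F)$-sequence and as the reason the innermost factor $a_1$ cannot degenerate to $1$. Keeping the index shift $i\mapsto p-i+1$ consistent between the two implications is the other place where a slip is easy, so I would state the dictionary once at the outset and reuse it verbatim in both directions.
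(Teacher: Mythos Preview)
Your argument is correct. The paper actually states Proposition~\ref{proposition51} without proof, treating it as an elementary observation; your verification via the dictionary $d_i=a_1\cdots a_{p-i+1}$, the quotient identity $d_i/d_{i+1}=a_{p-i+1}$, and the telescoping product is exactly the natural way to unpack it, and you correctly isolate the one nontrivial point (that $d_p\nmid F$ forces $a_1=d_p\ge 2$).
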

\begin{corollary}\label{corollary52}
	Let $a$ be the smallest positive integer that does not divide $F.$ Then $\Sat(F)$  contains at least one element of 	$\Sat(F)$-rank equal to $p$ if and only if $a(2^p-1)<F.$
\end{corollary}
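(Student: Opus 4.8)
The plan is to prove Corollary \ref{corollary52} by combining the structural description of $\Sat(F)$-sequences from Proposition \ref{proposition51} with the realizability criterion from Corollary \ref{corollary50}. First I would reformulate the question. By Proposition \ref{proposition51}, every $\Sat(F)$-sequence of length $p$ has the form $(d_1,\dots,d_p)$ with $d_p=a_1$, $d_{p-1}=a_1a_2,\dots,d_1=a_1a_2\cdots a_p$ for some $\{a_1,\dots,a_p\}\subseteq \N\backslash\{0,1\}$ with $a_1\nmid F$. By Corollary \ref{corollary50}, a $\Sat(F)$-sequence $(d_1,\dots,d_p)$ gives rise to an element of $\Sat(F)$ of $\Sat(F)$-rank $p$ as soon as $d_1+d_2+\cdots+d_p<F$ (taking all $t_i=1$, which trivially satisfies the coprimality conditions). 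Conversely, by Theorem \ref{theorem49}, any element of $\Sat(F)$ of rank $p$ produces a $\Sat(F)$-sequence of length $p$ together with positive integers $t_i$, and since $d_1+\cdots+d_p\le t_1d_1+\cdots+t_pd_p<F$, the sum $d_1+\cdots+d_p$ is necessarily less than $F$. Hence $\Sat(F)$ contains an element of rank $p$ if and only if there exists a $\Sat(F)$-sequence $(d_1,\dots,d_p)$ with $d_1+\cdots+d_p<F$.

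Next I would minimize the sum over all admissible sequences of length $p$. Using the parametrization above, the sum is
$$
d_1+\cdots+d_p=a_1+a_1a_2+\cdots+a_1a_2\cdots a_p=a_1\left(1+a_2+a_2a_3+\cdots+a_2\cdots a_p\right).
$$
Since each $a_i\ge 2$, every partial product is minimized by taking $a_2=a_3=\cdots=a_p=2$, which forces the bracketed factor to be at least $1+2+4+\cdots+2^{p-1}=2^p-1$. The factor $a_1$ is constrained only by $a_1\ge 2$ and $a_1\nmid F$, so the minimal admissible value of $a_1$ is exactly the smallest positive integer $a$ not dividing $F$ (note $a\ge 2$ automatically, since $1\mid F$). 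Therefore the minimum possible value of $d_1+\cdots+d_p$ over all $\Sat(F)$-sequences of length $p$ equals $a(2^p-1)$, attained by the sequence corresponding to $a_1=a$ and $a_2=\cdots=a_p=2$.

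The hard part will be confirming that the two minimizations are genuinely independent, i.e.\ that choosing the smallest $a_1=a$ is compatible with the constraint $a_1\nmid F$ while simultaneously taking $a_2=\cdots=a_p=2$; but this is immediate since the divisibility constraint applies only to $a_1$, and the $a_i$ for $i\ge 2$ range freely over $\N\backslash\{0,1\}$. Consequently the infimum of the sum is realized, and $\Sat(F)$ contains an element of $\Sat(F)$-rank $p$ if and only if this minimal sum satisfies the strict inequality, that is, if and only if $a(2^p-1)<F$. This completes the argument.
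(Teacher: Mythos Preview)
Your proof is correct and follows essentially the same route as the paper: both reduce the existence of a rank-$p$ element to the nonemptiness of $\L(F,p)$ via Theorem~\ref{theorem49} and Corollary~\ref{corollary50}, then use the parametrization of Proposition~\ref{proposition51} to see that the minimal value of $d_1+\cdots+d_p$ is achieved by $a_1=a$ and $a_2=\cdots=a_p=2$, yielding $a(2^p-1)$ via the geometric series. Your write-up is somewhat more explicit in justifying the two directions of the biconditional and the independence of the minimizations, but the argument is the same.
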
 
\begin{proof}
	By applying Theorem \ref{theorem49} and Corollary \ref{corollary50}, we deduce that $\Sat(F)$ contains at least an element of $\Sat(F)$-rank equal to $p$ if and only if $\L(F,p)\neq \emptyset.$	
	By applying now Proposition \ref{proposition51}, we have that $\L(F,p)\neq \emptyset.$ if and only if there is $\{a_1,a_2,\cdots, a_p \}\subseteq \N\backslash \{0,1\}$ such that $a_1\nmid F$ and $a_1a_2\cdots a_p+a_1a_2\cdots a_{p-1}+\cdots+a_1<F.$ To conclude the proof, it suffices to note that  this  is verified if and only if $a\cdot 2^{p-1}+a\cdot 2^{p-2}+\cdots+a<F.$ By using the formula of the sum of a geometry progression, we obtain that $a\cdot 2^{p-1}+a\cdot 2^{p-2}+\cdots+a<F$ if and only if $a(2^p-1)<F.$
	
\end{proof}
\begin{example}\label{example53} We can assert, by using Corollary \ref{corollary52}, that $\Sat(18)$ does not have  elements with $\Sat(F)$-rank equal to $3,$ because $4(2^3-1)>18.$
\end{example}

We finish this work showing an algorithm which allows us to compute the set $\C(d_1,
\dots, d_p)$ from $(d_1,
\dots, d_p)\in \L(F,p).$

In first time we note that to compute the set
$$
\{(t_1,\dots, t_p)\in (\N\backslash\{0\})^p\mid t_1d_1+\cdots+t_pd_p\leq F-1\}
$$
is equivalent to compute the set
$$
\{(x_1,\dots, x_p)\in \N^p\mid d_1x_1+\cdots+d_px_p\leq F-1-(d_1+\dots+ d_p)\}.
$$

Also, observe that
$$
\{(x_1,\dots, x_p)\in \N^p\mid d_1x_1+\cdots+d_px_p\leq F-1-(d_1+\dots+ d_p)\}=
$$
$$
\{(x_1,\dots, x_p)\in \N^p\mid d_1x_1+\cdots+d_px_p=k \mbox { for some }$$
$$k\in \{0,\cdots, F-1-(d_1+\dots+ d_p)\}.
$$

If $(x_1,\dots, x_p)\in \N^p$ and  $d_1x_1+\cdots+d_px_p=k, $ then $d_p\nmid k.$ Hence, $k=a\cdot d_p$ and consequently, $\{(x_1,\dots, x_p)\in \N^p\mid d_1x_1+\cdots+d_px_p=k\}=\{(x_1,\dots, x_p)\in \N^p\mid \frac{d_1}{d_p}x_1+\cdots+\frac{d_p}{d_p}x_p=a \}.$

Finally, observe that Algorithm 14 from \cite{AMC}, allows us compute the set $\{(x_1,\dots, x_p)\in \N^p\mid \frac{d_1}{d_p}x_1+\cdots+\frac{d_p}{d_p}x_p=a \}.$

\begin{algorithm}\label{algorithm54}\mbox{}\par
\end{algorithm}
\noindent\textsc{Input}: $(d_1,\dots, d_p)\in \L(F,p).$ \par
\noindent\textsc{Output}: $\C(d_1,\dots, d_p).$

\begin{enumerate}
	\item[(1)] $\alpha=F-1-(d_1+\dots+ d_p).$
	\item[(2)] For all $k\in \{0,\cdots, \lfloor \frac{\alpha}{d_k}\rfloor \},$ by using Algorithm 14 from \cite{AMC}, compute $D_k=\{(x_1,\dots, x_p)\in \N^p\mid \frac{d_1}{d_p}x_1+\cdots+\frac{d_p}{d_p}x_p=k \}.$
	\item[(3)] For all $k\in \{0,\cdots, \lfloor \frac{\alpha}{d_k}\rfloor \},$ let $E_k=\{(x_1+1,\dots, x_p+1)\mid (x_1,\dots, x_p)\in D_k \}.$
	\item[(4)] 	$A=\bigcup_{k=0}^{\lfloor \frac{\alpha}{d_k}\rfloor}E_k.$	
	\item[(5)] Return $\{(t_1,\cdots,t_p)\in A\mid \gcd \displaystyle \left\{\frac{d_i}{d_{i+1}}, t_{i+1}\right\}=1  \mbox{ for all } i\in \{1,\dots, p-1\}\}.$
\end{enumerate}

\end{document}